\documentclass{amsart}

\usepackage{amsfonts,amsmath,amssymb}
\usepackage{xcolor}
\usepackage{latexsym,verbatim}
\usepackage{paralist}
\usepackage{ulem}
\usepackage{graphics}

\newtheorem{theo}{Theorem}[section]
\newtheorem{prop}[theo]{Proposition}
\newtheorem{coro}[theo]{Corollary}
\newtheorem{lema}[theo]{Lemma}

\theoremstyle{remark}
\newtheorem{rema}[theo]{Remark}

\title[Rational solutions of Abel differential equations]{Rational solutions and limit cycles of polynomial and trigonometric Abel equations}

\author{L.A. Calder\'on*}
\address{Departament de Ciencies Matematiques i Informatica, IAC3 Institute of Applied Computing \& Community Code, Universitat de les Illes Ballears, 07122 Palma, Spain}
\email{l.calderon@uib.es}

\keywords{Limit cycle; Abel equation; Invariant curve}

\begin{document}
	
	\begin{abstract}
		We study the Abel differential equation $x'=A(t)x^3+B(t)x^2+C(t)x$. Specifically, we find bounds on the number of its rational solutions when $A(t), B(t)$ and $C(t)$ are polynomials with real or complex coefficients; and on the number of rational limit cycles when $A(t), B(t)$ and $C(t)$ are trigonometric polynomials with real coefficients.
	\end{abstract}
	
	\maketitle
	
	\section{Introduction}
	
	The Abel differential equation
	\begin{equation}\label{intro1}
		x'=A(t)x^3+B(t)x^2+C(t)x
	\end{equation}
	with $A(t)$, $B(t)$, $C(t)$ continuous, and the generalized Abel equations
	\begin{equation}\label{intro2}
		x'=\displaystyle\sum_{i=1}^nA_i(t)x^i
	\end{equation}
	with $n>3$ and $A_i(t)$ continuous for all $i\in\{1,\dots,n\}$ have been widely studied in the context of Hilbert's 16th Problem, and because of their utility to model real-world models \cite{BNP} and its own mathematical interest \cite{G}. 
	
	In the field of the qualitative theory of differential equations, the two most studied problems regarding Abel equations are the Smale-Pugh problem and the Poincaré Centre-Focus problem. 
	
	The Smale-Pugh problem \cite{S} consists on bounding the number of limit cycles (isolated periodic solutions) of \eqref{intro1} when its coefficients with respect to $t$ are periodic. Lins Neto \cite{LN} showed that there is no bound for the number of limit cycles of \eqref{intro1} when $A(t)$, $B(t)$ and $C(t)$ are trigonometric polynomials. This problem is related to Hilbert's 16th problem through the transformation of Cherkas \cite{CH}. Some results in this line can be consulted in, for example, \cite{ABF,ABF2,AGG,GG,GLL,HL1,HL2,HZ,I,LL,Pan,P,YHL}.
	
	The other problem is the adaptation of the classical Poincaré Centre-Focus problem to this setting, proposed by Briskin, Françoise and Yondim \cite{BFY1,BFY2}. Here, as $x(t)\equiv0$ is always a periodic solution of \eqref{intro1}, we will say that the equation has a center if every solution in a neighbourhood of $x(t)\equiv0$ is also periodic. For more details on the state of art of this problem, see for example \cite{G,GGS,Pa}.
	
	Another natural question regarding an Abel equation is related to how many solutions of a certain type the equation can have. For example, Giné et al \cite{GGLL} showed that the generalized Abel equations of degree $n$ in $x$ with polynomial coefficients have at most $n$ polynomial solutions. Other works about similar topics are \cite{CGM,GTZ,LLV2,LLWW,OV,QSY,V3}. 
	
	In \cite{LLV,V2}, the authors study how many rational closed solutions can the Abel equation 
	\begin{equation}\label{Abelsinc}
		x'=A(t)x^3+B(t)x^2
	\end{equation}
	have, both in the polynomial (this is, when $A(t)$ and $B(t)$ are polynomials and the rational solutions are quotients of polynomials) and in the trigonometric cases (this is, when $A(t)$ and $B(t)$ are trigonometric polynomials, and the rational solutions are quotients of trigonometric polynomials). Note that, in the trigonometric case, this problem is related with the Smale-Pugh problem, as, if the equation has not a center, the rational solutions are periodic and, consequently, limit cycles of the equation. Thus, the number of certain types of limit cycles of the equation is being determined.
	
	In \cite{BCFO}, we studied the polynomial case for the equation \eqref{Abelsinc}, for polynomials with real or complex coefficients, but looking for all rational solutions of the equation, regardless of whether they are closed or not. In particular, conditions are given for the equation to have at most two rational solutions; a general bound for the number of rational solutions is given; and Darboux's Integrability Theory (see more in \cite{GGLL2} for example) is used to determine how many rational solutions can coexist before the equation is integrable.
	
	Furthermore, in \cite{BCO} the trigonometric case of the equation \eqref{Abelsinc} is studied and bounds are found for the number of rational limit cycles, using Darboux's Integrability Theory to improve the bounds given in \cite{V2}. This work is only performed in the real context, since the techniques developed in the polynomial case do not translate adequately in the complex environment. 
	
	Clearly, there are various ways to extend this problem, for example, by searching results about the number of rational solutions of generalized Abel equations. However, the most immediate extension consists on adding the linear term to the equation, this is, to look for rational solutions of
	\begin{equation}\label{ecu0}
		x'=A(t)x^3+B(t)x^2+C(t)x+D(t).
	\end{equation}
	Without loss of generality, we assume $D(t) \equiv 0$. This problem has been studied in \cite{V}, but only for closed rational solutions in the polynomial case. 
	
	The objective of this work is to extend the techniques and results in \cite{BCFO,BCO} to the ``complete'' Abel differential equation.
	
	In precise terms, throughout this paper, we consider the Abel equation
	\begin{equation}\label{ecu1}
		x' = A(t) x^3 + B(t) x^2 + C(t) x
	\end{equation}
	with $A(t)$, $B(t)$, and $C(t)\in R$, where $R$ is either the polynomial ring in $t$ with coefficients in $\mathbb{K} \in \{\mathbb{R}, \mathbb{C}\}$, in symbols, $R = \mathbb{K}[t]$, or the ring of trigonometric polynomials with real coefficients, in symbols, $R = \mathbb{R}[\cos(t), \sin(t)]$. Note that we exclude the study over $\mathbb{C}[\cos(t), \sin(t)]$ because, as mentioned above, the techniques can not be applied appropriately in this setting. 
	
	Moreover, since the case $C(t) \equiv 0$ is completely studied in \cite{BCFO,BCO}, in what follows we consider $C(t) \not\equiv 0$, and we also assume that $A(t) \not\equiv 0$ and $B(t) \not\equiv 0$, since otherwise \eqref{ecu1} would be a Ricatti or Bernoulli equation, respectively. We will also assume that $A(t)$ is non constant, as this case has been studied by Pliss in \cite{P}, in the trigonometric case, and by Gine, Grau and Llibre \cite{GGLL} in the polynomial case (see Remark \ref{RemAconstant}).
	
	%
	
	Whenever possible, we will treat both the polynomial case and the trigonometric case as a whole. For this reason, it is convenient to note that $\deg(-)$ indistinguishable means in the polynomial case the usual degree of an univariate polynomial; while in the trigonometric case it means the highest integer $n$ such that $a_n \neq 0$ or $b_n \neq 0$ in the expression 
	\[a_0 + \sum_{n=1}^N a_n \cos(n t) + b_n \sin(n t).\] 	
	Moreover, to facilitate the cases integration, we recall that the problem of finding rational solutions of the equation \eqref{ecu1}, in the polynomial case, and rational solutions (that will be automatically periodic) of \eqref{ecu1}, in the trigonometric case, is equivalent to studying the invariant curves of \eqref{ecu1} of the form $p(t)x+q(t)=0$, with $p(t), q(t) \in R$ and $p(t) \neq 0$ for every $t$. Therefore, we use the term invariant curve of degree one in $x$, (or simply invariant curve, since there can be no confusion here) to unify the terminology, and the results will be given, always that is possible, in terms of invariant curves and not rational solutions. For more details on invariant curves see \cite{GGLL2}.
	
	We are now ready to present the main result in this paper. Note that it is written in terms of invariant curves, with the exception of the case (b.2.2), where we can improve the bound by studying instead the number of rational limit cycles, which is equivalent to the number of invariant curves when the equation does not have a center.
	
	\begin{theo}\label{mainth}
		With the above notation and conventions, the following claims are satisfied.
		\begin{itemize}
			\item[(a)] If $\deg(A)-\deg(C) \leq 1$, then \eqref{ecu1} has at most one invariant curve. 
			\item[(b)] If $\deg(A)-\deg(C) > 1$ and we are either in the polynomial case, or in the trigonometric one with $C(t)$ non-constant, then
			\begin{itemize}
				\item[(b.1)] If $\deg(A)-\deg(C)$ is odd or $\deg(A)+\deg(C) <  2 \deg(B)$, then \eqref{ecu1} has at most two invariant curves.
				\item[(b.2)] If $\deg(A)-deg(C)$ is even and $\deg(A)+\deg(C) \geq 2 \deg(B)$, then,
				\begin{itemize}
					\item[(b.2.1)]  in the polynomial case, the number of invariant curves of \eqref{ecu1} is at most \[\binom{\deg(A)}{(\deg(A)+1)/2}+1.\] Moreover, in this case, \eqref{ecu1} is Darboux integrable, if the number of invariant curves of \eqref{ecu1} is greater that $\deg(A)+\deg(C)+4$.
					\item[(b.2.2)] in the trigonometric case, an upper bound for the number of rational limit cycles of \eqref{ecu1} is $2\deg(A)+2\deg(C)+4$.
				\end{itemize}
			\end{itemize}
			\item[(c)] If $\deg(A)-\deg(C) > 1$, in the trigonometric case and with $C(t)\equiv c$ a non zero constant, then an upper bound for the number of invariant curves is  $4\deg(A)$.
		\end{itemize}
	\end{theo}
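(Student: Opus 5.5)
We plan to follow the strategy of \cite{BCFO,BCO}, adapted to the presence of the linear term. An invariant curve $p(t)x+q(t)=0$ with $p\neq 0$ everywhere corresponds to a rational solution $x=-q/p$. Since $x\equiv 0$ is a solution, we may restrict to nonzero solutions and write them as $x=1/y$. The substitution $y=1/x$ turns \eqref{ecu1} into the Riccati-type relation
\[
-y' y = A + B y + C y^2 .
\]
Writing $y=u/v$ in lowest terms gives
\[
vu'-uv' = -(Av^2+Buv+Cu^2)\frac{v}{u}.
\]
Hence $u$ divides $v$, which forces $u$ to be a unit (a nonzero constant, or in the trigonometric case a nonvanishing unit after normalization). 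So every nonzero rational solution has the form $x=1/y$ with $y\in R$ a ``polynomial'', satisfying
\begin{equation}\label{keyprop}
y y' + C y^2 + B y + A = 0 .
\end{equation}
Comparing degrees in \eqref{keyprop} will organize the case split.

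Put $n=\deg y$. The terms of \eqref{keyprop} have degrees $2n-1$ (or $2n$ in the trigonometric case), $2n+\deg C$, $n+\deg B$ and $\deg A$.

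\textbf{Case (a).} When $\deg A-\deg C\le 1$, leading-term balance forces $n$ to be at most about $0$, so $y$ is essentially constant. Substituting two distinct solutions $y_1,y_2$ and subtracting gives
\[
(y_1-y_2)\bigl(C(y_1+y_2)+B\bigr)=-(y_1y_1'-y_2y_2'),
\]
and this pins down $y$ uniquely.

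\textbf{Case (b).} When $\deg A-\deg C>1$, the dominant balance is $Cy^2\sim -A$, which requires $2n=\deg A-\deg C$. If $\deg A-\deg C$ is odd, this balance is impossible, so the dominant balance must involve $By$. The same happens when $\deg A+\deg C<2\deg B$: then $n$ is determined by $\deg A=n+\deg B$ or $2n+\deg C=n+\deg B$. In these situations the leading coefficient is determined linearly, and the difference of two solutions gives
\[
(y_1-y_2)\bigl(C(y_1+y_2)+B\bigr)+(y_1y_1'-y_2y_2')=0 .
\]
Using the relation for three solutions as in \cite{BCFO}, we will show that at most two solutions exist; this gives (b.1).

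\textbf{Case (b.2).} In the remaining case the leading coefficient of $y$ satisfies a quadratic, and the coefficients of $y$ are then determined recursively. We will count solutions as in \cite{BCFO}, either through a Bézout bound on the polynomial system for the coefficients or through the binomial count of the possible sign patterns (choices of square roots), which yields $\binom{\deg A}{(\deg A+1)/2}+1$. For the Darboux statement, each invariant curve $1-xy_i=0$ has cofactor $K_i=-x(Cy_i+B)\ldots$, a polynomial of degree at most $1+\deg C$ in $t$ and $2$ in $x$. These cofactors lie in a vector space of dimension roughly $\deg A+\deg C+4$. Once the number of curves exceeds this dimension, a linear relation $\sum\lambda_iK_i=0$ (or one equal to minus the divergence) exists, which gives a Darboux first integral.

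\textbf{Case (b.2.2).} Here we use that periodic solutions of a non-center equation are isolated, and bound the trigonometric solutions by complexifying $z=e^{it}$. This turns $y$ into a Laurent polynomial, and we count via the leading coefficients at both ends $z^{\pm n}$. That gives a factor of $2$, hence $2\deg A+2\deg C+4$.

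\textbf{Case (c).} When $C$ is a nonzero constant, the degree balance involves $y'$ together with $Cy$. We plan to obtain the bound $4\deg A$ by counting the roots of the leading-coefficient equations at both ends, together with a finite linear recursion.

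We expect the main obstacle to be the counting in (b.2): making rigorous that the recursive determination of the coefficients leaves only finitely many branches, and bounding them sharply. The trigonometric case in particular needs care, since reality constraints and the lack of unique factorization in $R$ complicate the argument.
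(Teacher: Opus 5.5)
Your reduction to $x=1/y$ with $y\in R$ and $yy'+Cy^2+By+A=0$ is the paper's Corollary~\ref{cor1} (with $y=p$). The lowest-terms step is misstated, though. Clearing denominators gives $-u(u'v-uv')=v(Av^2+Buv+Cu^2)$, so $v\mid u^2v'$, hence $v\mid v'$; it is $v$, not $u$, that must be constant.

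The genuine gap is that you control $n=\deg y$ by leading-term balance for a single solution, and this cannot work. For example, $y=t^2+1$, $A=t^2+1$, $C=t^5$, $B=-(t^7+t^5+2t+1)$ satisfy the equation with $n=2$, although $\deg A-\deg C=-3$; here $Cy^2$ balances $By$. So in (a) one invariant curve of arbitrary degree can exist, and the content of (a) is that a second one cannot. Likewise, $2n=\deg A-\deg C$ in (b) is not forced for an individual solution.

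The missing idea is to eliminate $B$, not $A$, between two solutions. Dividing each equation by $y_i$ and subtracting gives
\[
A=y_1y_2\Bigl(C+\frac{(y_1-y_2)'}{y_1-y_2}\Bigr),
\]
which is \eqref{ecu4}. Here $y_1y_2(y_1-y_2)'/(y_1-y_2)=A-Cy_1y_2\in R$ has degree at most $\deg y_1+\deg y_2-1$ in the polynomial case, and at most $\deg y_1+\deg y_2$ in the trigonometric case. Consequences:
\begin{itemize}
\item In the polynomial case, or when $C$ is non-constant, $\deg A-\deg C=\deg y_1+\deg y_2\ge 2$ for every pair of solutions. This is (a).
\item Three solutions must therefore all have degree $(\deg A-\deg C)/2$, which is impossible when this number is odd.
\item Inserting this degree into $-B=A/y+y'+Cy$ gives $2\deg B\le\deg A+\deg C$. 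Together with the parity point, this is (b.1).
\item The remaining subcase of (a) (trigonometric, $C\equiv c$, $\deg A=1$) needs its own computation: $y_i=a_iA$, the identity forces $cA+A'$ to be constant, and hence $A$ is constant.
\end{itemize}
Your subtraction keeps $B$ and yields no degree relation, and appealing to the $C\equiv0$ papers does not replace this step.

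In (b.2.1), the binomial bound does not come from ``choices of square roots''. It comes from $y_i\mid A$ (immediate from your equation, but never used) together with all $y_i$ having the same degree. So the $y_i$ are among the divisors of $A$ of one fixed degree, up to scalars. Proposition~\ref{propor2} (at most one pair of curves with proportional $y$'s) supplies the $+1$.

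Your Darboux idea is the right one, but the cofactor of $yx-1=0$ is $Ax^2+(B+A/y)x$, not something of $t$-degree $1+\deg C$. You need $\sum\alpha_iA/y_i=0$ together with $\sum\alpha_i=0$, so that the $Ax^2$ terms cancel (Proposition~\ref{PropD}). The paper's count giving $\deg A+\deg C+4$ uses that every $A/y_i$ has degree $(\deg A+\deg C)/2$, which again comes from the pairwise identity.

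Parts (b.2.2) and (c) cannot come from counting leading coefficients of Laurent polynomials at $z^{\pm n}$; that says nothing about how many solutions exist. The mechanism is a dichotomy. Trigonometric polynomials of degree $\le\hat d=(\deg A+\deg C)/2$ form a space of dimension $2\hat d+1$. Having at least $2(2\hat d+2)$ invariant curves makes the equation Darboux integrable, hence a center, hence without limit cycles.

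In (c) the pairwise identity no longer equalizes degrees, so the paper uses the crude bound $\deg(A/y_i)\le\deg A-1$. It combines this with the fact that with $C\equiv c\ne0$ the equation never has a center: its displacement function satisfies $d(0)=0$ and $d'(0)=e^{2\pi c}-1\ne0$. So Darboux integrability is impossible, and there are fewer than $4\deg A$ invariant curves. Neither the center argument nor this non-center criterion appears in your plan.
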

	
	Note that we have assumed that $C(t)\not\equiv 0$, and so this result does not cover the results of \cite{BCFO,BCO}. However, the conditions of (b.1) are very similar to their equivalent versions in \cite{BCFO,BCO} for the case $C(t)\equiv 0$, and the general bound of (b.2.1) for the polynomial case is exactly the same. On the other hand, when we apply Darboux's Theory of Integrability in this context, because of details of the proof the bounds of (b.2.2) and (c) are less sharp than their equivalent ones in \cite{BCFO,BCO}. 
	
	In \cite{BCFO} using computational numerical and algebraic tools we were able to develop a method to study thoroughly the number of invariant curves that equation \eqref{Abelsinc} has, in the polynomial case. Its computational effort grows sharply as $\deg(A)$ rises though, so we only studied the cases up to $\deg(A)=5$.
	
	Using this method, in \cite{BCFO} for low values of $\deg(A)$ we could study the sharpness of the general bound \[\binom{\deg(A)}{(\deg(A)+1)/2}+1\] for the number of invariant curves of the equation \eqref{Abelsinc}. We did show that this bound was sharp when $\deg(A)$ was one or three, but it was not sharp when it was five. 
	
	On the other hand, we did not study the sharpness of the bounds provided by Darboux's Theory of Integrability when $C(t)\equiv 0$ in \cite{BCFO,BCO}, although some numerical experiments seemed to show that they were sharp for low values of $\deg(A)$. In general, the sharpness of these bounds is an open question. 
	
	As this paper covers the polynomial and the trigonometric case at the same time, we have decided to not develop an equivalent exhaustive method for $\eqref{ecu1}$, although we offer a lemma related with the parametrization of the equation in terms of its invariant curves, which is very helpful to make numerical experiments in this direction. Consequently, the sharpness of the bounds provided in Theorem \ref{mainth} are left as an open question. 
	
	The structure of the paper is as follows: first we remind the factorization properties of the ring $\mathbb{R}[\cos t,\sin t]$ and in section 2.1 we show the characterization of the invariant curves of \eqref{ecu1}; then, in section 2.2, we suppose that \eqref{ecu1} has two or more different invariant curves, and we prove (a) and (b.1); next, in section 2.3, we focus in the polynomial case to provide the general bound of (b.2.1); and finally in section 3 we apply Darboux's Theory of Integrability to prove the remaining parts of (b.2.1), together with (b.2.2) and (c).

	\section{Invariant curves of degree one in $x$}
	
	%
	
	We remind that we have set $R$ to be equal to $\mathbb{K}[t],\ \mathbb{K} \in \{\mathbb{R}, \mathbb{C}\}$, or equal to $\mathbb{R}[\cos(t), \sin(t)]$. While in the first case $R$ is a Euclidean domain and everything works fine in terms of divisibility or factorization of elements, in the second case $R$ is not even a unique factorization domain. So special care must be taken when using it with factorization arguments. Nevertheless, $\mathbb{R}[\cos(t), \sin(t)]$ has somewhat stable factorization properties for being a half-factorial Dedekind domain. We summarize those extensively used along this paper and refer the reader to \cite[Appendix A]{BCO} for more details.
	
	\begin{prop}\label{Prop trigpoly}
		Let $p(t)\in\mathbb{R}[\cos(t),\sin(t)]$. If $p(t)\notin\mathbb{R}$ and $p(t)\neq0$ for every $t$, then the following holds.
		\begin{itemize}
			\item[(a)] The decomposition of $p(t)$ into irreducibles of $\mathbb{R}[\cos(t),\sin(t)]$ is unique, up to the order of the factors and the product by invertible elements.
			\item[(b)] If $p(t) = h(t) r(t)$ and none of the irreducible factors of $p(t)$ divides $h(t)$, then $r(t)$ divides $p(t)$. 
			\item[(c)] If $q(t)\in\mathbb{R}[\cos(t),\sin(t)] \setminus \{0\}$ is non-constant, then $p(t)$ and $q(t)$ are relatively prime if and only if they have not irreducible factors in common. 
		\end{itemize}
	\end{prop}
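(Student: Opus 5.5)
The plan is to pass to the complexification. Write $z=e^{it}$. The substitution $\cos t=(z+z^{-1})/2$, $\sin t=(z-z^{-1})/(2i)$ identifies $\mathbb{R}[\cos(t),\sin(t)]$ with the subring of the Laurent polynomial ring $\mathbb{C}[z,z^{-1}]$ fixed by the involution $\sigma(f)(z)=\overline{f(1/\bar z)}$. The Laurent ring is a UFD, being a localization of $\mathbb{C}[z]$, and its units are the monomials $cz^k$. Every $p\in R$ of degree $n$ can be written as $p=z^{-n}P(z)$ with $P\in\mathbb{C}[z]$ of degree $2n$ and $P(0)\neq 0$. Its zeros are invariant under $w\mapsto 1/\bar w$. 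The hypothesis that $p(t)\neq 0$ for every real $t$ says precisely that $P$ has no zeros on the unit circle. Therefore the zeros of $P$ split into pairs $\{w,1/\bar w\}$ with $|w|\neq 1$.

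First I describe the irreducibles of $R$ that can divide such a $p$. For a pair $\{w,1/\bar w\}$ with $|w|<1$, the element $z^{-1}(z-w)(z-1/\bar w)$, suitably rescaled, is $\sigma$-invariant. Concretely it is a real trigonometric polynomial of degree $1$: up to a real constant it equals $\cos t-a\cos$-type expressions, namely $\alpha+\beta\cos t+\gamma\sin t$ with $\alpha^2>\beta^2+\gamma^2$. Since $\deg$ is additive on $R$ (leading coefficients multiply in $\mathbb{C}[z,z^{-1}]$), any divisor in $R$ of a nonvanishing $p$ has degree equal to half the number of its roots. Hence these degree-one factors $\ell_w$ are irreducible. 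The factorization $p=c\prod \ell_{w_j}$, with $c\in\mathbb{R}^*$, gives existence.

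For uniqueness in (a), suppose $p=\prod q_k$ is any factorization into irreducibles of $R$. Each $q_k$ divides $p$, so $q_k$ never vanishes on the real line and its zero set in $\mathbb{C}^*$ is a union of pairs $\{w,1/\bar w\}$ from $p$. Then $\ell_w$ divides $q_k$ in $R$: the quotient $q_k/\ell_w$ is a $\sigma$-invariant Laurent polynomial, hence lies in $R$. By irreducibility, $q_k$ is a real multiple of a single $\ell_w$. So the factorization is determined by the multiset of root-pairs of $P$, up to order and units, which are the nonzero real constants (the units of $R$ are exactly the nonvanishing degree-$0$ elements). This step is the main obstacle. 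It requires checking carefully that $\sigma$-invariance together with membership in $\mathbb{C}[z,z^{-1}]$ implies membership in $R$. I would verify this via real and imaginary parts of the coefficients $c_{-k}=\overline{c_k}$. It also requires noting that nonvanishing rules out the non-UFD phenomena of $R$, such as $\sin^2t=(1-\cos t)(1+\cos t)$, all of which involve roots on the unit circle.

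Parts (b) and (c) then follow from unique factorization. For (b), write $p=hr$. The factors of $h$ and $r$ are factors of $p$, so both are products of the $\ell_w$. If no irreducible factor of $p$ divides $h$, then $h$ must be a unit, so $r=h^{-1}p$ divides $p$. For (c), if $p$ and $q$ share an irreducible $\ell$, then $\ell$ is a non-unit common divisor, so they are not relatively prime. Conversely, let $d\in R$ be a common non-unit divisor. Then $d$ divides $p$, so $d$ is nonvanishing, non-constant, and a product of the $\ell_w$. Any of these $\ell_w$ divides both $p$ and $q$, giving a common irreducible factor.
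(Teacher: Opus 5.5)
Your proof is correct. The paper gives no argument for this proposition: it only records that $\mathbb{R}[\cos(t),\sin(t)]$ is a half-factorial Dedekind domain and refers to \cite[Appendix A]{BCO}. In that structural picture, $R\cong\mathbb{R}[x,y]/(x^2+y^2-1)$ is a Dedekind domain with class group of order two. An irreducible element generates either a principal prime ideal or a product of two non-principal prime ideals coming from real zeros, and the hypothesis $p(t)\neq 0$ is exactly what excludes the second kind.

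Your complexification via $z=e^{it}$ makes this explicit and self-contained. Up to nonzero real constants, the irreducible divisors of a nonvanishing $p$ are the degree-one elements $\ell_w=\bar w z-(1+|w|^2)+wz^{-1}=2\,\mathrm{Re}(w)\cos t+2\,\mathrm{Im}(w)\sin t-(1+|w|^2)$, one for each root pair $\{w,1/\bar w\}$ of $P$. Uniqueness then becomes root counting in $\mathbb{C}[z]$. The structural route explains why nonvanishing is the right hypothesis and also covers elements with real zeros (half-factoriality). Yours names the irreducibles concretely and uses nothing beyond $\mathbb{C}[z]$.

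The step you call the main obstacle is routine. The condition $c_{-k}=\overline{c_k}$ gives $c_kz^k+\overline{c_k}z^{-k}=2\,\mathrm{Re}(c_k)\cos kt-2\,\mathrm{Im}(c_k)\sin kt$. Also, $q_k/\ell_w$ is $\sigma$-invariant because $\sigma$ is a ring automorphism of the domain $\mathbb{C}[z,z^{-1}]$ fixing both $q_k$ and $\ell_w$.

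On (b): read literally it is trivial, since $p=hr$ already exhibits $r$ as a divisor of $p$. Your detour through ``$h$ is a unit'' is correct but not needed. The property the paper relies on later is presumably the Euclid-type version: if $p\mid hr$ and no irreducible factor of $p$ divides $h$, then $p\mid r$, for arbitrary $h,r\in R$. This is used in the lemma giving $s_2=s_1+k\,\widehat{q}$, to pass from $q_1\mid q_1'q_2$ to $q_1\mid q_1'$.

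Your setup gives this at once if you add that each $\ell_w$ is prime in $R$. If $\ell_w\mid ab$, then $w$ is a root of $a$ or of $b$. $\sigma$-invariance makes $1/\bar w$ a root of that same factor, and the quotient is again $\sigma$-invariant. Comparing multiplicities of the roots of $P$ then yields $p\mid r$.

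Likewise, in (c) you use the weaker reading of ``relatively prime'' (no common non-unit divisor). The stronger reading, $(p,q)=R$, also follows quickly. With no common $\ell_w$, $p$ and $q$ have no common zero in $\mathbb{C}^*$, so $ap+bq=1$ for some $a,b\in\mathbb{C}[z,z^{-1}]$. Replacing $a,b$ by $(a+\sigma(a))/2$ and $(b+\sigma(b))/2$ puts this identity in $R$.
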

	
	As mentioned before, we are interested in studying the invariant curves of degree one in $x$ (that from now on we call simply invariant curves) of \eqref{ecu1}, that is, the invariant curves \eqref{ecu1} of the form $p(t)x+q(t)=0$, with $p(t), q(t) \in R$, 
	$p(t)$ non-constant and $p(t) \neq 0$ for every $t$. Because $p(t) \neq 0$ we always assume that $p(t)$ and $q(t)$ are relatively prime.
	
	Recall that $p(t)x+q(t)=0$ is invariant for \eqref{ecu1} if and only if there exists $K(t,x) \in R[x]$, called cofactor of $p(t)x+q(t)$, such that 
	\begin{equation}\label{ecu:inv}
		p'(t)x+q'(t)+p(t)\left(A(t)x^3+B(t)x^2+C(t)x\right)=(p(t)x+q(t))K(t,x).
	\end{equation}
	For degree reasons, $K(t,x) = K_2(t) x^2 + K_1(t) x + K_0(x)$ with $K_i(t) \in R,\ i = 0, 1, 2$. 
	
	\subsection{One invariant curve}
	A characterization of the invariant curves of the equation \eqref{Abelsinc} can be found in \cite{LLWW}, which was then adapted to our case, and the trigonometric case with $C(t)\equiv0$ in \cite{V,V2}, respectively. We include its proof here for the sake of completeness.
	
	\begin{prop}\label{prop1}
		The curve $p(t)x+q(t)=0$ is invariant for \eqref{ecu1} if and only if $q(t)=c $ is a nonzero constant and 
		\begin{equation}\label{ecu2}
			c^2 A(t ) = \left(c B(t) - \left(p'(t)+p(t)C(t)\right)\right)\, p(t).
		\end{equation}
	\end{prop}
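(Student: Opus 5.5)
The plan is to compare coefficients of powers of $x$ in the invariance identity \eqref{ecu:inv}, using $K(t,x)=K_2(t)x^2+K_1(t)x+K_0(t)$. Expanding the right-hand side, the identity splits into four equations:
\begin{align*}
x^3 &: \; pA = pK_2, \\
x^2 &: \; pB = pK_1 + qK_2, \\
x^1 &: \; p' + pC = pK_0 + qK_1, \\
x^0 &: \; q' = qK_0 .
\end{align*}
Since $p(t)\neq 0$ for every $t$, the first equation gives $K_2=A$. The second then reads $qA=p(B-K_1)$.

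The key step is to show that $q$ is constant, and the $x^0$ equation $q'=qK_0$ is what forces this. There are two cases.

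In the polynomial case, $q\neq 0$ and $q\mid q'$ with $\deg q'<\deg q$ force $q'=0$, so $q=c$.

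In the trigonometric case, the degree argument is replaced as follows. We have $\deg q' \le \deg q$, and $q'=qK_0$ with $\deg(qK_0)=\deg q+\deg K_0$, since degrees add in $\mathbb{R}[\cos t,\sin t]$. Hence $\deg K_0=0$, so $K_0$ is a constant $k$. Then $q=ae^{kt}$, and being a trigonometric polynomial it is bounded and periodic, which forces $k=0$; thus $q$ is constant.

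Next we rule out $q=0$. If $q\equiv 0$, then coprimality with $p$ forces $p$ to be a unit. The curve would then be $x=0$, which is excluded because $p$ is assumed non-constant. (Alternatively, one simply notes that nontrivial curves have $q=c\neq 0$.) So $q=c\neq 0$.

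With $q=c$ and $K_0=0$, the remaining equations become
\[
cA = p(B-K_1), \qquad p' + pC = cK_1 .
\]
Solving the second for $K_1=(p'+pC)/c$ and substituting into the first, then multiplying by $c$, gives exactly \eqref{ecu2}:
\[
c^2A = \bigl(cB-(p'+pC)\bigr)p .
\]

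For the converse, assume $q=c\neq 0$ and \eqref{ecu2} holds. Set $K_2=A$, $K_1=(p'+pC)/c\in R$ and $K_0=0$. Each coefficient identity can then be checked directly, the $x^2$ one being \eqref{ecu2} divided by $c$. Hence \eqref{ecu:inv} holds.

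The main obstacle I expect is the trigonometric step $q\mid q'\Rightarrow q$ constant. I will rely on additivity of degree, $R$ being a domain whose leading-order terms multiply like those of Laurent polynomials in $e^{it}$, and on the fact that differentiation does not raise the degree. Everything else is routine coefficient matching.
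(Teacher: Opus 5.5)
Your proof is correct and follows the same route as the paper. Both compare the coefficients of $x^3,x^2,x,1$ in \eqref{ecu:inv}, deduce from $q'=qK_0$ that $K_0\equiv 0$ and $q=c\neq 0$, and then eliminate $K_1=(p'+pC)/c$ to obtain \eqref{ecu2}. The paper only asserts the step from $q'=qK_0$ to ``$q$ is a nonzero constant'' and leaves the converse implicit, so your degree argument in both rings, your exclusion of $q\equiv 0$ via coprimality, and your explicit check of the converse fill in details rather than change the approach.
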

	
	\begin{proof}
		%
		By \eqref{ecu:inv}, $p(t)x+q(t)=0$ is invariant for \eqref{ecu1} if and only if \[ p'(t)x+q'(t)+p(t)\left(A(t)x^3+B(t)x^2+C(t)x\right)=\] \[=(p(t)x+q(t))\left(K_2(t)x^2+K_1(t)x+K_0(t)\right),\] with $K_i(t) \in R,\ i = 0, 1, 2$. So, equating the coefficients of the powers of $x$ and omitting the arguments, we have
		\[q'=q\, K_0,\]
		\[p'+p\, C=K_0\,p + K_1\, q,\]
		\[p\, B = K_1\, p+K_2\, q,\]
		\[p\, A = K_2\, p \Longleftrightarrow A = K_2\ (\text{since}\ p(t)\not\equiv 0).\]
		From the first equality, we obtain both $K_0(t)\equiv0$ and $q(t)=c$ for a nonzero constant $c$. Now, substituting this conveniently into the second equation, we get  
		\[p'(t)+p(t)C(t) = K_1(t) c,\] that is, $K_1(t) = (p'(t)+p(t)C(t))/c.$ So the third equation yields 
		\[p(t)B(t)=(p'(t)+p(t)C(t))p(t)/c + K_2(t) c.\] Therefore, 
		$K_2(t) = p(t)B(t)/c - (p'(t)+p(t)C(t))p(t)/c^2$ and, consequently, \[c^2\, A(t) = \left(c B(t) - \left(p'(t)+p(t)C(t)\right)\right)\, p(t),\] as claimed.
	\end{proof}
	
	Note that, without loss of generality, we can assume $c=-1$ in Proposition \ref{prop1}. We will assume this from now until the end of the paper. Thus, our invariant curves will always be of the form $p(t) x - 1 = 0$.
	
	Although the following result is a direct consequence of Proposition \ref{prop1} and its proof, we state it because we will use it extensively throughout the paper.
	
	\begin{coro}\label{cor1}
		If $p(t)x-1 = 0$ is an invariant curve of \eqref{ecu1}, then 
		$p(t)$ divides $A(t)$. In particular, 
		\begin{equation}\label{ecu3}
			-B(t) = A(t)/p(t) + p'(t) + p(t) C(t)
		\end{equation} and the cofactor of $p(t)x-1=0$ is equal to $A(t) x^2 -\left(p'(t) + p(t) C(t)\right)x$. 
	\end{coro}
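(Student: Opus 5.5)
The corollary follows from Proposition \ref{prop1} and the coefficient identities in its proof, specialized to $c=-1$. I will substitute $c=-1$ into \eqref{ecu2}, which gives
\[
A(t) = \left(-B(t) - p'(t) - p(t)C(t)\right) p(t).
\]
All of $B$, $p'$, $p$, $C$ lie in $R$, so the factor $-B-p'-pC$ lies in $R$. Hence $A = p\cdot(\text{element of }R)$, and $p$ divides $A$ in $R$; this holds in both the polynomial and the trigonometric setting.

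Since $p(t)\neq 0$ for every $t$ and $R$ is an integral domain, I can divide by $p$ and rearrange to get
\[
A(t)/p(t) = -B(t) - p'(t) - p(t)C(t),
\]
which is exactly \eqref{ecu3}.

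For the cofactor, I will reuse the relations from the proof of Proposition \ref{prop1}:
\[
K_0\equiv 0,\qquad K_2 = A,\qquad K_1 = \frac{p'+pC}{c}.
\]
With $c=-1$ the middle relation becomes $K_1 = -(p'+pC)$. The cofactor is therefore
\[
K(t,x) = A(t)x^2 - \left(p'(t)+p(t)C(t)\right)x.
\]

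There is no real obstacle here; the only point needing care is that divisibility must be taken in $R$ itself. This is guaranteed because the complementary factor was shown explicitly to be an element of $R$.
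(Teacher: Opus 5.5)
Your proposal is correct and follows the paper's route: the paper treats this corollary as a direct consequence of Proposition \ref{prop1} and of the coefficient relations $K_0\equiv 0$, $K_1=(p'+pC)/c$, $K_2=A$ from its proof, specialized to $c=-1$. Your observation that the cofactor $-B-p'-pC$ lies in $R$ is what gives divisibility in $R$ itself.
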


	\begin{rema}\label{RemAconstant}
		As mentioned in the introduction, in the trigonometric case, the case where $A(t)$ is constant is extensively studied in \cite{P}. In the other case, since, by Corollary \ref{cor1}, $A(t)$ being constant implies that $p(t)$ is also constant (in fact, regardless of whether we are in the polynomial or the trigonometric case), we have that the corresponding invariant curve is polynomial; a case that has been extensively studied in \cite{GGLL}. Thus, in the rest of the paper, we always assume that $\deg(A)>0$ 
	\end{rema}

	\subsection{Two or more invariant curves}
	Now suppose \eqref{ecu1} has at least two different invariant curves.
	
	\begin{prop}\label{prop2b}
		If $p_1(t)x-1=0$ and $p_2(t)x-1=0$ are two different invariant curves of \eqref{ecu1}, then 
		\begin{equation}\label{ecu4}
			A(t) =  C(t) p_1(t) p_2(t) + \frac{(p_2(t) - p_1(t))'p_1(t) p_2(t) }{p_2(t)-p_1(t)}.
		\end{equation}
		In particular, $\deg(A) - \deg(C) \leq \deg(p_1) + \deg(p_2)$.
	\end{prop}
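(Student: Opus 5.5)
Apply Corollary \ref{cor1} to each curve and subtract. For $i=1,2$, equation \eqref{ecu3} gives
\[
-B = \frac{A}{p_i} + p_i' + p_i C .
\]
Equating the two expressions and multiplying through by $p_1p_2$ yields
\[
A\,(p_2-p_1) + p_1p_2\,(p_1-p_2)' + p_1p_2\,(p_1-p_2)\,C = 0 .
\]
Since the curves are different, $p_1\neq p_2$, so $p_2-p_1\not\equiv 0$ in $R$. Both $\mathbb{K}[t]$ and $\mathbb{R}[\cos t,\sin t]$ are integral domains, so we may divide by $p_2-p_1$ in the fraction field. Rearranging signs, $(p_1-p_2)'/(p_1-p_2) = (p_2-p_1)'/(p_2-p_1)$, and we get
\[
A = C\,p_1p_2 + \frac{(p_2-p_1)'\,p_1p_2}{p_2-p_1},
\]
which is \eqref{ecu4}. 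This part is pure algebra and involves no obstacle.

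For the degree inequality, write $d=p_2-p_1\neq 0$ and rewrite \eqref{ecu4} as
\[
(A - C p_1p_2)\,d = d'\,p_1p_2 .
\]
First suppose $A\neq C p_1p_2$. Taking degrees, and using that degree is additive on products in both rings (in the trigonometric ring the leading harmonic coefficients multiply without vanishing, as in \cite[Appendix A]{BCO}), we get
\[
\deg(A - C p_1 p_2) + \deg d = \deg(d') + \deg p_1 + \deg p_2 .
\]
Since $\deg(d')\le \deg d$ in both settings (equality in the trigonometric case when $d$ is non-constant), this gives $\deg(A-Cp_1p_2)\le \deg p_1+\deg p_2$. If instead $A=Cp_1p_2$, then $\deg A=\deg C+\deg p_1+\deg p_2$ directly.

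Now split into two cases, using $\deg A\le \max(\deg(A-Cp_1p_2),\ \deg C+\deg p_1+\deg p_2)$.
\begin{itemize}
\item If $\deg A > \deg C+\deg p_1+\deg p_2$, then $\deg A=\deg(A-Cp_1p_2)\le \deg p_1+\deg p_2$. Since $C\not\equiv 0$, this is at most $\deg C+\deg p_1+\deg p_2$, contradicting the case assumption. So this case cannot occur.
\item Otherwise $\deg A\le \deg C+\deg p_1+\deg p_2$, which is exactly the claim $\deg(A)-\deg(C)\le \deg(p_1)+\deg(p_2)$.
\end{itemize}

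The only delicate point is justifying degree additivity and $\deg(d')\le\deg(d)$ in $\mathbb{R}[\cos t,\sin t]$. Both follow by writing elements as Laurent polynomials in $e^{it}$, where the degree is the maximal absolute exponent and the leading coefficients multiply.
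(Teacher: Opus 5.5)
Your proof is correct and follows the paper's route. You combine the two instances of \eqref{ecu3}, clear denominators and divide by $p_2-p_1\neq 0$, then bound $\deg(A)$ by $\max\{\deg(Cp_1p_2),\deg(A-Cp_1p_2)\}$. Your derivation of $\deg(A-Cp_1p_2)\le\deg(p_1)+\deg(p_2)$ from $(A-Cp_1p_2)(p_2-p_1)=(p_2-p_1)'p_1p_2$ and degree additivity makes explicit a step the paper only asserts; the final case split is harmless but unnecessary.
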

	
	\begin{proof}
		By \eqref{ecu3},
		\[-A(t ) = \left(B(t) + \left(p_1'(t)+p_1(t)C(t)\right)\right)\, p_1(t).
		\]
		and
		\[-A(t ) = \left(B(t) + \left(p_2'(t)+p_2(t)C(t)\right)\right)\, p_2(t).
		\]
		If we multiply the first expression by $p_2(t)$ and the second expression by $p_1(t)$ and then take the difference between them, we get
		\[-(p_2(t) - p_1(t)) A(t) = (p'_1(t) - p'_2(t)) p_1(t) p_2(t) + (p_1(t)-p_2(t)) C(t) p_1(t) p_2(t).\] Now, since $p_1(t) \not\equiv p_2(t)$, \eqref{ecu4} follows.
		
		Finally, for the second part of the statement, it suffices to observe that by \eqref{ecu4} we have that 
		\begin{equation}\label{inqgrA}
			\begin{split}
				\deg(A) & \leq \max\left\{\deg(C(t) p_1(t) p_2(t)), \deg\left( \frac{(p_2(t) - p_1(t))'p_1(t) p_2(t) }{p_2(t)-p_1(t)} \right) \right\} =  \\
				& = \deg(C(t) p_1(t) p_2(t)) = \deg(C) + \deg(p_1) + \deg(p_2).
			\end{split}
		\end{equation}
		Therefore, $\deg(A) - \deg(C) \leq \deg(p_1) + \deg(p_2)$ as claimed. 
	\end{proof}
	
	\begin{prop}\label{prop2}
		Let $C(t)$ be non-constant. If $p_1(t)x-1=0$ and $p_2(t)x-1=0$ are two different invariant curves of \eqref{ecu1}, then \[\deg(A)  - \deg(C) = \deg(p_1) + \deg(p_2).\] In particular, $\deg(A) - \deg(C) > 1$.
	\end{prop}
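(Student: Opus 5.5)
The plan is to read the degree of $A$ directly off identity \eqref{ecu4} from Proposition \ref{prop2b}, and to show that the term $C(t)p_1(t)p_2(t)$ strictly dominates the second term in degree once $C(t)$ is non-constant. The inequality $\leq$ is already proved in \eqref{inqgrA}. What remains is to show that no cancellation of top-degree terms can occur. I will write $D(t) = (p_2(t)-p_1(t))'p_1(t)p_2(t)/(p_2(t)-p_1(t))$. Note that $D(t) = A(t) - C(t)p_1(t)p_2(t)$ belongs to $R$, so its degree makes sense.

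First I record that $\deg$ is additive on products in both rings. In $\mathbb{K}[t]$ this is standard. In $\mathbb{R}[\cos t,\sin t]$ I write $a_n\cos(nt)+b_n\sin(nt) = \mathrm{Re}\big((a_n - i b_n)e^{int}\big)$. The top-frequency part of a product of degrees $n$ and $m$ then contains $\tfrac12\mathrm{Re}\big((a_n-ib_n)(a'_m-ib'_m)e^{i(n+m)t}\big)$, whose coefficient is nonzero, and no other products reach frequency $n+m$. Consequently, whenever $g$ divides $f$ exactly in $R$, $\deg(f/g)=\deg f-\deg g$. I also use that differentiation does not raise the degree, so $\deg(h')\le \deg(h)$ in both cases; in the polynomial case the degree even drops by one.

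Next I bound $\deg D$. If $p_2-p_1$ is a nonzero constant, then $D\equiv 0$ and $A = Cp_1p_2$, so the claim follows immediately. Otherwise, $(p_2-p_1)' p_1p_2 = D\,(p_2-p_1)$ holds in $R$, and additivity gives
\[\deg D = \deg\big((p_2-p_1)'\big)+\deg p_1+\deg p_2-\deg(p_2-p_1)\le \deg p_1+\deg p_2 .\]
Since $C$ is non-constant, $\deg C\ge 1$. Hence
\[\deg(Cp_1p_2)=\deg C+\deg p_1+\deg p_2>\deg D.\]
The sum $A = Cp_1p_2 + D$ therefore has exactly the degree of its dominant term, which gives $\deg A-\deg C=\deg p_1+\deg p_2$. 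The main obstacle is the trigonometric additivity of degree; the leading-coefficient argument via $e^{int}$ sketched above should handle it.

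Finally, recall that invariant curves are taken with $p_i$ non-constant, so $\deg p_i\ge 1$ for $i=1,2$. The equality then gives $\deg A-\deg C\ge 2>1$.
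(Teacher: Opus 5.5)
Your proof is correct and follows essentially the same route as the paper. You put $r=A-Cp_1p_2$, bound $\deg(r)\le\deg(p_1)+\deg(p_2)$, and use $\deg(C)\ge 1$ to conclude that $Cp_1p_2$ determines the degree of $A$. Your extra care only tightens that same argument: you justify additivity of the trigonometric degree, and you treat separately the case where $p_2-p_1$ is constant (so $r\equiv 0$), which the paper's exact formula for $\deg(r)$ glosses over.
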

	
	\begin{proof}
		Let \[r(t) := \frac{(p_2(t) - p_1(t))'p_1(t) p_2(t) }{p_2(t)-p_1(t)}.\] 
		By \eqref{ecu4}, $r(t)$ is equal to $A(t) - C(t) p_1(t) p_2(t)$. Therefore, $r(t) \in R$ and $\deg(r) = \deg(p_1)+\deg(p_2)-1$, in the polynomial case, and $\deg(r) = \deg(p_1)+\deg(p_2)$, in the trigonometric case.
		
		Now, since $C(t)$ is not constant, in particular, $\deg(C) > 0$ and, consequently, $\deg(Cp_1p_2) > \deg(p_1)+\deg(p_2) \geq \deg(r)$. Therefore, since by \eqref{inqgrA} $\deg(A) \leq \deg(C p_1 p_2)$ and $\deg(A - C p_1 p_2) = \deg(r) < \deg(C p_1 p_2)$, we conclude that $\deg(A) = \deg(C p_1 p_2)$, that is, $\deg(A) - \deg(C) = \deg(p_1) + \deg(p_2)$, as claimed.
		
		The last statement is as simple as noting that $p_1(t)$ and $p_2(t)$ are non-constant, and so $\deg(p_1) + \deg(p_2)>1$. 
		%
		%
	\end{proof}
	
	\begin{rema}\label{rema1}
		Note that, in the polynomial case, $C(t)$ can be chosen constant in Proposition \ref{prop2} and everything remains the same because, in this case, $\deg(r) = \deg(p_1) + \deg(p_2) - 1 < \deg(C p_1 p_2)$. 
	\end{rema}
	
	\begin{rema}
		Observe that, in the polynomial case, \eqref{ecu4} can be interpreted in terms of Euclidean divisibility as follows: if $p_1(t)x-1=0$ and $p_2(t)x-1=0$ are two different invariant curves of \eqref{ecu1}, then the quotient and the remainder of the division of $A(t)$ by $p_1(t) p_2(t)$ are \[C(t)\ \quad{and}\quad r(t) = \frac{(p_2(t) - p_1(t))'p_1(t) p_2(t) }{p_2(t)-p_1(t)},\] respectively. Giving rise to a computable necessary condition for two factors of $A(t)$ to define invariant curves.
	\end{rema}
	
	Let us derive some consequences from Proposition \ref{prop2}. 
	
	\begin{coro}\label{coram1}
		If $\deg(A) - \deg(C) \leq 1$, then \eqref{ecu1} has at most one invariant curve. 
	\end{coro}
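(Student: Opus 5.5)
We argue by contraposition: assuming that \eqref{ecu1} has two different invariant curves $p_1(t)x-1=0$ and $p_2(t)x-1=0$, we show that $\deg(A)-\deg(C)>1$. Since $C(t)\not\equiv 0$, we split according to whether $C(t)$ is constant or not, and in the constant case whether we are in the polynomial or the trigonometric setting.

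\textbf{Case $C(t)$ non-constant.} Proposition \ref{prop2} gives $\deg(A)-\deg(C)=\deg(p_1)+\deg(p_2)$. Both $p_i(t)$ are non-constant, so this is at least $2>1$, as required.

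\textbf{Case $C(t)$ constant, polynomial setting.} By Remark \ref{rema1}, the argument of Proposition \ref{prop2} still applies. Hence $\deg(A)-\deg(C)=\deg(A)=\deg(p_1)+\deg(p_2)\geq 2$, as required.

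\textbf{Case $C(t)\equiv c\neq 0$, trigonometric setting.} This case needs a separate argument, because here the remainder $r(t)$ in the proof of Proposition \ref{prop2} has the same degree as $c\,p_1p_2$, so cancellation of leading terms is possible. Since $\deg(C)=0$, we must show $\deg(A)\geq 2$. By Remark \ref{RemAconstant}, $\deg(A)\geq 1$, so suppose for contradiction that $\deg(A)=1$.
\begin{itemize}
\item[(i)] By Corollary \ref{cor1}, each $p_i(t)$ divides $A(t)$. In $\mathbb{R}[\cos t,\sin t]$ degrees add under multiplication, and $p_i(t)$ is non-constant. Hence $A=p_i u_i$ with $\deg(u_i)=0$, so $p_i=\lambda_i A$ for nonzero real constants $\lambda_1\neq\lambda_2$.
\item[(ii)] Substituting into \eqref{ecu4} gives
\[A=c\lambda_1\lambda_2A^2+\frac{(\lambda_2-\lambda_1)A'\,\lambda_1\lambda_2A^2}{(\lambda_2-\lambda_1)A}=\lambda_1\lambda_2A\,(cA+A').\]
Since $A\not\equiv0$, this forces $cA+A'=1/(\lambda_1\lambda_2)$, a constant.
\item[(iii)] Write $A=a+b\cos t+d\sin t$. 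Then the degree-one part of $cA+A'$ is $(cb+d)\cos t+(cd-b)\sin t$, and it must vanish. This gives $d=-cb$ and then $-(c^2+1)b=0$. Hence $b=d=0$ and $A$ is constant, a contradiction.
\end{itemize}
So $\deg(A)\geq 2>1$ in this case as well.

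In every case two invariant curves force $\deg(A)-\deg(C)>1$, which proves the corollary. The main obstacle is the trigonometric case with constant $C$, where Proposition \ref{prop2} does not apply. It is handled above by reducing to $p_i$ proportional to $A$ and the explicit computation in (iii).
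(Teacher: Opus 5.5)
Your proof is correct and follows the paper's argument essentially verbatim. You use Proposition~\ref{prop2}, with Remark~\ref{rema1} covering constant $C$ in the polynomial case. For constant $C$ in the trigonometric case you reduce to $p_i=\lambda_i A$ and show that $cA+A'=1/(\lambda_1\lambda_2)$ forces the first harmonics of $A$ to vanish. Your explicit justification of $p_i=\lambda_i A$ via additivity of degree in $\mathbb{R}[\cos t,\sin t]$ is a small detail that the paper leaves implicit.
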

	
	\begin{proof}
		If $C(t)$ is not constant, then, by Proposition \ref{prop2}, we are done. This also extends to the polynomial case, when $C(t)$ is constant (see Remark \ref{rema1}). Thus, suppose $A(t)$ and $B(t)$ are non-constant real trigonometric polynomials and $C(t) = c \in \mathbb{R} \setminus \{0\}$; in particular, $\deg(A) = 1$ by hypothesis. Therefore, $p_1(t) = a\, A(t)$ and $p_2(t) = b\, A(t)$ for some $a, b \in \mathbb{R}\setminus\{0\}$ with $a \neq b$. Thus, by \eqref{ecu4}, 
		\[
		A(t) =  a b A(t) (c A(t) + A'(t)).
		\]
		Therefore, $c A(t) + A'(t)=\frac{1}{ab}$. Now, since $A(t) = a_0 + a_1 \cos(t) + b_1 \sin(t),$ for some $a_0, a_1$ and $b_1 \in \mathbb{R}$, we obtain that 
		$$c a_0 + (c a_1 + b_1) \cos(t) + (c b_1 - a_1) \sin(t) = \frac{1}{ab}.$$
		Therefore, 
		$$ca_0ab=1,\quad ca_1+b_1=0,\quad cb_1-a_1=0.$$
		From the last two equalities, $a_1=b_1=0$, in contradiction with $A(t)$ being non constant. 
	\end{proof}

	\begin{lema}\label{lema2}
		Let $p_1(t) x - 1 = 0$ and $p_2(t) x - 1 = 0$ be two different invariant curves of \eqref{ecu1}. If $\deg(p_1) \neq \deg(p_2)$, and 
		\begin{itemize}
			\item[(a)] $C(t)$ is not constant, then has no more invariant curves.
			\item[(b)] $C(t)$ is constant, then $\deg(A) = \deg(p_1) + \deg(p_2)$.
		\end{itemize}
	\end{lema}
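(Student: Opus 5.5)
Both parts rest on Proposition \ref{prop2b}, applied to different pairs of curves, and on the degree of the remainder term $r(t)$ appearing in \eqref{ecu4}. The extra hypothesis $\deg(p_1)\neq\deg(p_2)$ pins down that degree: the leading term of $p_2-p_1$ cannot cancel, so $\deg(p_2-p_1)=\max\{\deg(p_1),\deg(p_2)\}$. Assume without loss of generality that $\deg(p_1)<\deg(p_2)$.

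For (a), suppose a third invariant curve $p_3(t)x-1=0$ exists, distinct from the first two. Since $C(t)$ is non-constant, Proposition \ref{prop2} applies to each of the three pairs and gives
\[
\deg(A)-\deg(C)=\deg(p_1)+\deg(p_2)=\deg(p_1)+\deg(p_3)=\deg(p_2)+\deg(p_3).
\]
The first two equalities force $\deg(p_2)=\deg(p_3)$, and the second and third force $\deg(p_1)=\deg(p_2)$. This contradicts the hypothesis, so there is no third curve. This part is purely combinatorial once Proposition \ref{prop2} is available.

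For (b), we have $C(t)=c$ constant. In the polynomial case Remark \ref{rema1} already gives $\deg(A)=\deg(p_1)+\deg(p_2)$, so only the trigonometric case remains. By \eqref{ecu4},
\[
A=c\,p_1p_2+r,\qquad r=\frac{(p_2-p_1)'\,p_1p_2}{p_2-p_1}.
\]
By \eqref{inqgrA}, $\deg(A)\le \deg(p_1)+\deg(p_2)$. Suppose the inequality were strict. Then the top-degree harmonics of $c\,p_1p_2$ and $r$ must cancel, which means the top harmonic of $(p_2-p_1)'p_1p_2$ equals that of $-c(p_2-p_1)p_1p_2$.

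To exploit this, write $n=\deg(p_2)>\deg(p_1)$. The top harmonic of $p_2-p_1$ is then that of $p_2$, say $a\cos(nt)+b\sin(nt)$ with $(a,b)\ne(0,0)$. Its derivative has top harmonic $n(-a\sin(nt)+b\cos(nt))$. Top harmonics multiply consistently: multiplying by $p_1p_2$ rotates and scales them by the same top harmonic of $p_1p_2$, and the leading part of a product of trigonometric polynomials factors through the complex representation $(a-ib)e^{int}$. So, passing to $e^{int}$ coefficients, the cancellation condition becomes
\[
in(a-ib)=-c(a-ib),
\]
that is, $in=-c$. This is impossible, since $c$ is real and nonzero and $n>0$. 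This is the same mechanism as in the proof of Corollary \ref{coram1}.

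The main obstacle is making rigorous that the leading harmonic of the quotient $r$ is governed by leading harmonics in this way. I would handle it by working with the identity $(A-c\,p_1p_2)(p_2-p_1)=(p_2-p_1)'p_1p_2$, which holds in $R$, and comparing coefficients of $e^{i(\cdot)t}$ at the top degree $\deg(p_1)+2\deg(p_2)$ after complexifying. There the assumption $\deg(A)<\deg(p_1)+\deg(p_2)$ gives the cancellation above, and the contradiction follows. Hence $\deg(A)=\deg(p_1)+\deg(p_2)$.
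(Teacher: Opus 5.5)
Your proposal is correct and essentially matches the paper's proof. Part (a) is exactly the paper's pairwise application of Proposition~\ref{prop2}. In part (b), the paper compares top harmonics in the equality of the two expressions \eqref{ecu3} for $-B(t)$, which is your denominator-free form of \eqref{ecu4} divided by $p_1p_2$; both arguments come down to the leading harmonic of $p_2'+c\,p_2$ being nonzero because $c+in\neq 0$.
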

	
	\begin{proof}
		(a) Suppose on contrary, that $h(t,x) = 0$ is an invariant curve of \eqref{ecu1} different from $p_1(t) x - 1 = 0$ and $p_2(t) x - 1 = 0$. By Proposition \ref{prop1}, there exists a polynomial $p_3(t) \in R$ such that $h(t,x) = p_3(t)x-1$. Since, by Proposition \ref{prop2}, $\deg(p_i) + \deg(p_j) = \deg(A) - \deg(C)$ for every $1 \leq i < j \leq 3$, we conclude that $\deg(p_i) = \deg(p_j) = \frac{\deg(A) - \deg(C)}2$ for every $1 \leq i < j \leq 3,$ in contradiction with the hypothesis.
		
		(b) Note that by Remark 2.7 the result is immediate in the polynomial case, so we focus in the trigonometric one. Let $c \in \mathbb{R} \setminus \{0\}$ be such that $C(t) = c$. Then, by \eqref{ecu3}, 
		\[
		-B(t) = A(t)/p_1(t) + p_1'(t) + p_1(t) c = A(t)/p_2(t) + p_2'(t) + p_2(t) c.
		\]
		Suppose that $\deg(A) \neq \deg(p_1) + \deg(p_2)$. Then, by Proposition \ref{prop2b}, $\deg(A) < \deg(p_1) + \deg(p_2)$. Therefore $\deg(A/p_1) < \deg(p_2)$ and $\deg(A/p_2) < \deg(p_1)$. 
		
		Now, since $\deg(p_1) \neq \deg(p_2)$, we can assume $\deg(p_1) < \deg(p_2)$, then the higher degree terms of $p'_2(t)$ and $p_2(t) c$ must cancel each other. Thus, if $p_2(t) = a_n \cos(n t) + b_n \sin(n t) +$ \textit{lower degree terms}, where $a_n$ and $b_n$ are not both zero, then 
		\[c a_n \cos(n t) + c b_n \sin(n t) = - n a_n \sin(t) + n b_n \cos(t).\] Therefore, $c a_n - n b_n = c b_n + n a_n = 0$ and, since $c \in \mathbb{R} \setminus \{0\}$, we obtain $a_n = b_n = 0$, a contradiction. Hence, $\deg(p_1) = \deg(p_2)$.
	\end{proof}
	
	\begin{rema}\label{rema2}
		
		Note that according to Remark \ref{rema1} nothing changes if $C(t)$ is constant in (a) of the previous lemma in the polynomial case.
	\end{rema}
	
	Observe that as an immediate consequence of the proof of the above result we have the following corollary. 
	
	\begin{coro}\label{cor3}
		
		Suppose that we are in the polynomial case or in the trigonometric case when $C(t)$ is not constant. If \eqref{ecu1} has three invariant curves, say, $p_i(t) x - 1 = 0,\ i = 1,2,3$, then \[\deg(p_i) = \frac{\deg(A) - \deg(C)}2,\] for every $i \in \{1,2,3\}$.
	\end{coro}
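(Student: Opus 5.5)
The plan is to apply Proposition \ref{prop2} to each of the three pairs of curves and then solve the resulting linear system for the three degrees.

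Let $d:=\deg(A)-\deg(C)$. The three curves $p_i(t)x-1=0$, $i=1,2,3$, are pairwise different, so each pair $\{i,j\}$ with $1\le i<j\le 3$ is a pair of two different invariant curves of \eqref{ecu1}. In the trigonometric case with $C(t)$ non-constant, Proposition \ref{prop2} applies directly to each pair. In the polynomial case, Proposition \ref{prop2} covers $C(t)$ non-constant, and Remark \ref{rema1} extends it to constant $C(t)$: there $\deg(r)=\deg(p_1)+\deg(p_2)-1<\deg(Cp_1p_2)$, so the same degree comparison goes through. In either case we obtain
\[
\deg(p_1)+\deg(p_2)=\deg(p_1)+\deg(p_3)=\deg(p_2)+\deg(p_3)=d .
\]

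From the first two equalities, $\deg(p_2)=\deg(p_3)$. From the first and third, $\deg(p_1)=\deg(p_3)$. Hence all three degrees coincide, say $\deg(p_i)=m$, and then $2m=d$, which gives $\deg(p_i)=\frac{\deg(A)-\deg(C)}2$ for every $i\in\{1,2,3\}$. This is the same bookkeeping already carried out in the proof of Lemma \ref{lema2}(a), which is why the statement follows immediately from that proof.

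No step here is a real obstacle. The only point needing care is that the hypotheses guarantee the exact degree equality of Proposition \ref{prop2}, as opposed to the mere inequality of Proposition \ref{prop2b}. That is exactly why the trigonometric case with constant $C(t)$ is excluded: there the top-degree terms of $A - Cp_1p_2$ could cancel.
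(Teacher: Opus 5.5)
Your proposal is correct and takes the same route as the paper. The paper reads the corollary off the proof of Lemma \ref{lema2}(a): it applies Proposition \ref{prop2} to each of the three pairs, using Remark \ref{rema1} for constant $C(t)$ in the polynomial case, and solves the resulting system for the degrees.
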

	
	\begin{prop}\label{propc}
		
		Suppose that we are in the polynomial case or in the trigonometric case when $C(t)$ is not constant. If $\deg(A)-\deg(C)$ is odd or $2 \deg(B)>\deg(A)+\deg(C)$, then \eqref{ecu1} has at most two invariant curves. 
	\end{prop}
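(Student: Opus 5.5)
Argue by contradiction: assume \eqref{ecu1} has three different invariant curves $p_i(t)x-1=0$, $i=1,2,3$, and derive a contradiction from each of the two hypotheses.

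\emph{Odd case.} Corollary \ref{cor3} applies in the present setting (polynomial case, or trigonometric case with $C(t)$ non-constant) and gives $\deg(p_i)=(\deg(A)-\deg(C))/2$ for $i=1,2,3$. Since each $\deg(p_i)$ is an integer, $\deg(A)-\deg(C)$ must be even, contradicting the first hypothesis. So if $\deg(A)-\deg(C)$ is odd, there are at most two invariant curves.

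\emph{Case $2\deg(B)>\deg(A)+\deg(C)$.} Here I only need two curves. Write $n:=\deg(p_1)=\deg(p_2)=(\deg(A)-\deg(C))/2$; this value is forced by Corollary \ref{cor3} if there is a third curve, and otherwise by Proposition \ref{prop2} together with Lemma \ref{lema2}(a). Then use \eqref{ecu3} for $p_1$:
\[
-B = A/p_1 + p_1' + p_1 C .
\]
The degrees of the three terms on the right are:
\begin{itemize}
\item $\deg(A/p_1)=\deg(A)-n=(\deg(A)+\deg(C))/2$;
\item $\deg(p_1')\le n$;
\item $\deg(p_1C)=n+\deg(C)=(\deg(A)+\deg(C))/2$.
\end{itemize}
Since $n\le(\deg(A)+\deg(C))/2$, we get $\deg(B)\le(\deg(A)+\deg(C))/2$, that is, $2\deg(B)\le\deg(A)+\deg(C)$. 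This contradicts the hypothesis. In fact this shows that under $2\deg(B)>\deg(A)+\deg(C)$ there is at most one invariant curve with $\deg(p)=(\deg(A)-\deg(C))/2$, which is more than what is needed.

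\emph{Remaining configurations.} The only case not yet covered is exactly two curves with $\deg(p_1)\ne\deg(p_2)$, and that is trivially at most two, so no argument is needed there. It remains to check that degree additivity $\deg(fg)=\deg(f)+\deg(g)$ and $\deg(f/g)=\deg(f)-\deg(g)$ hold in $\mathbb{R}[\cos t,\sin t]$. This follows because it is a domain whose degree is the total degree on the circle, so the leading harmonics multiply without cancellation (see \cite[Appendix A]{BCO}).

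The main obstacle I expect is the bookkeeping in the degree estimate for $B$. One could worry that the trigonometric derivative does not lower degree. This does not matter, because only the upper bound $\deg(p_1')\le\deg(p_1)$ is used, and $\deg(p_1)$ is dominated by $\deg(p_1C)$ when $\deg(C)\ge0$.
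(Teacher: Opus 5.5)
Your argument is correct and is essentially the paper's: in both cases you assume a third curve, use Corollary \ref{cor3} to force $\deg(p_i)=(\deg(A)-\deg(C))/2$, and then bound $\deg(B)$ via \eqref{ecu3}. One aside is wrong but not needed. With only two curves nothing forces $\deg(p_1)=\deg(p_2)$, since Lemma \ref{lema2}(a) only rules out a third curve when the degrees differ; so drop the ``otherwise by Proposition \ref{prop2} together with Lemma \ref{lema2}(a)'' clause and rely on Corollary \ref{cor3} under the three-curve assumption. Note also that your estimate shows there is no curve of that degree at all, not merely at most one.
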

	
	\begin{proof}
		Corollary \ref{cor3} clearly implies that if $\deg(A)-\deg(C)$ is odd, then \eqref{ecu1} can have at most two invariant curves.
		
		Suppose that $2 \deg(B)>\deg(A)+\deg(C)$. By \eqref{ecu3}, if $p_1(t)x-1=0$ is an invariant curve, then \[-B(t)=A(t)/p_1(t)+p_1'(t)+p_1(t)C(t),\] and thus, $\deg(B)\leq\max\{\deg(A)-\deg(p_1),\deg(p_1),\deg(p_1)+\deg(C)\}$. 
		Now, if \eqref{ecu1} has more than 
		two invariant curves, by Corollary \ref{cor3}, $\deg(p_1)=\frac{\deg(A)-\deg(C)}{2}$, and we conclude that $$\deg(B)\leq\max\left\{\frac{\deg(A)+\deg(C)}{2},\frac{\deg(A)-\deg(C)}{2}\right\}=\frac{\deg(A)+\deg(C)}{2}$$ getting a contradiction. 
	\end{proof}
	
	Let us parameterize the family of equations \eqref{ecu1} with at least two invariant curves, $p_1(t)x-1=0$ and $p_2(t)x-1=0$.
	
	First, we will introduce some notation. Let $q(t) = \gcd(p_1(t), p_2(t))$, this is also well defined in the trigonometric case by Proposition \ref{Prop trigpoly}(c), and let $s_1(t)$, $s_2(t)$ and $s(t)$ be the polynomials (resp. trigonometric polynomials) such that
	\begin{equation}\label{ecmcd}
		p_1(t)=q(t)s_1(t),\quad p_2(t)=q(t)s_2(t),\quad A(t)=q(t)s_1(t)s_2(t)s(t).\end{equation}
	Note that $\gcd(s_1(t), s_2(t)) = 1$.
	
	\begin{lema}
		Let $p_1(t)$ and $p_2(t)$ be two non-constant polynomials in $t$ with coefficients in $\mathbb{K}$ (resp. two non-constant trigonometric polynomials with real coefficients) such that $p_i(t) \neq 0$, for every $t \in \mathbb{R}$ and $i \in \{1,2\}$. With the previous notation, if 
		\[\frac{(p_2(t) - p_1(t))'p_1(t) p_2(t) }{p_2(t)-p_1(t)}\]
		is a polynomial (resp. a trigonometric), then $s_2(t) = s_1(t) + k\, \widehat{q}(t)$, where $k$ is a nonzero constant and $\widehat{q}(t)$ is either constant or a product of not necessarily distinct irreducible monic factors of $q(t)$. As a consequence, if $p_1(t)x-1=0$ and $p_2(t)x-1=0$ are two invariant curves of \eqref{ecu1}, the claim must also be true. Furthermore, 
		$$s(t)=-q'(t)+q(t)\left(C(t)+\frac{\widehat{q}'(t)}{\widehat{q}(t)}\right).$$
	\end{lema}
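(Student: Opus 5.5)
Write $p_2-p_1 = q(s_2-s_1)$ using \eqref{ecmcd}, and set $d(t):=s_2(t)-s_1(t)$, which is nonzero because $p_1\not\equiv p_2$. The expression in the statement then becomes
\[
\frac{(q d)' \, q^2 s_1 s_2}{q d} \;=\; q' q s_1 s_2 + \frac{q^2 s_1 s_2\, d'}{d}.
\]
The first summand already lies in $R$. So the hypothesis is equivalent to $d \mid q^2 s_1 s_2 d'$ in $R$.

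The plan is to show that every irreducible factor of $d$ divides $q$. We argue by irreducible factors, using Proposition \ref{Prop trigpoly} in the trigonometric case; note that $d$ divides a product of factors that never vanish, so the factorization statements apply.

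First, let $\pi$ be an irreducible factor of $d$. If $\pi \mid s_1$, then $\pi \mid s_2 = s_1 + d$, contradicting $\gcd(s_1,s_2)=1$. The same argument works for $s_2$, so $\gcd(d, s_1 s_2)=1$.

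Next, suppose $\pi^m \,\|\, d$ with $m\ge 1$, and suppose $\pi\nmid q$. Then $\pi^m \mid d'$. But $\pi^{m-1}$ exactly divides $d'$ in the polynomial case, because we are in characteristic zero. In the trigonometric case we use the analogous fact from \cite[Appendix A]{BCO}: an irreducible $\pi$ does not divide $\pi'$, since a non-vanishing $\pi$ cannot divide its derivative for degree/periodicity reasons. This gives a contradiction. Hence every irreducible factor of $d$ divides $q$, and we can write $d = k\,\widehat q$ with $k$ a nonzero constant and $\widehat q$ a monic product of irreducible factors of $q$, possibly repeated, or $\widehat q$ constant. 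This proves $s_2 = s_1 + k\widehat q$. The "as a consequence" part follows immediately, since Proposition \ref{prop2b} (equation \eqref{ecu4}) shows that the expression equals $A - Cp_1p_2 \in R$.

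For the formula for $s$, substitute $p_2-p_1 = k q\widehat q$ into \eqref{ecu4}:
\[
A = C q^2 s_1 s_2 + \frac{(q\widehat q)'\, q^2 s_1 s_2}{q\widehat q} = q s_1 s_2\Big( C q + q' + q\frac{\widehat q'}{\widehat q}\Big).
\]
Comparing with $A = q s_1 s_2 s$ and cancelling $q s_1 s_2 \neq 0$ gives
\[
s = q' + q\Big(C + \frac{\widehat q'}{\widehat q}\Big).
\]
This differs from the stated formula by the sign of $q'$. So I would recheck the sign conventions, i.e.\ whether $p_1$ and $p_2$ should be swapped or whether the curves are normalized as $p x - 1$. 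The derivation itself is otherwise mechanical.

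The main obstacle is the multiplicity step in the trigonometric ring. There we need a replacement for "$\pi^m \| d \Rightarrow \pi^{m-1}\| d'$", which requires care because $R$ is not a UFD. I would handle it by using Proposition \ref{Prop trigpoly}(b),(c) to write $d = \pi^m e$ with $\pi\nmid e$, so that $d' = \pi^{m-1}(m\pi' e + \pi e')$, and then argue that $\pi\nmid \pi'$. That last claim can be checked by passing to $\mathbb{C}[z,z^{-1}]$ with $z=e^{it}$, where $\pi$ corresponds to a product of conjugate linear factors with simple roots.
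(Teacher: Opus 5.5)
Your proof that $s_2=s_1+k\widehat{q}$ is correct and follows the paper's argument. Both reduce the hypothesis to $d\mid q^2s_1s_2d'$, use $\gcd(d,s_1s_2)=1$, and remove the part of $d$ prime to $q$ by a derivative argument. The paper collects that part into one factor $q_1$ and derives $q_1\mid q_1'$; you work one irreducible at a time with multiplicities, which amounts to the same thing.

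One justification needs fixing. It is not true that $d$ ``divides a product of factors that never vanish'', because $d'$ is one of those factors and may vanish. In the trigonometric case what you actually need is that $d$ itself never vanishes, so that Proposition~\ref{Prop trigpoly} applies to it. This is immediate. At a real zero $t_0$ of $d$ we have $q^2s_1s_2(t_0)\neq 0$, since $q,s_1,s_2$ divide $p_1p_2$. Meanwhile $d'/d$ has a pole at $t_0$, so $q^2s_1s_2d'/d$ could not lie in $R$. The paper's proof tacitly uses the same fact.

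On the formula for $s$: your sign is right and the statement is wrong. The paper derives $s$ from \eqref{ecu3} rather than \eqref{ecu4}. It correctly obtains $s_2s+q's_1+qs_1'+qs_1C=s_1s+q's_2+qs_2'+qs_2C$. It then rewrites this as $q(d'+Cd)=d(s+q')$, but the equation actually gives $q(d'+Cd)=d(s-q')$. That is, $s=q'+q\bigl(C+\widehat{q}'/\widehat{q}\bigr)$, exactly your formula. The last display of the paper's proof even gives a third version, $-\bigl(q'+q(C+\widehat{q}'/\widehat{q})\bigr)$.

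A concrete check: by Proposition~\ref{propor1}, $p_1=t^2+1$ and $p_2=2(t^2+1)$ are invariant for $A=2(t^2+1)\bigl(2t+(t^2+1)C\bigr)$ and $B=-3\bigl(2t+(t^2+1)C\bigr)$. Here $q=t^2+1$, $s_1=1$, $s_2=2$ and $\widehat{q}=1$. Then $A=qs_1s_2s$ gives $s=2t+(t^2+1)C=q'+qC$, not $-q'+qC$. So no change of conventions is needed; the ``Furthermore'' part should read $s=q'+q\bigl(C+\widehat{q}'/\widehat{q}\bigr)$.
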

	
	\begin{proof}
		By \eqref{ecmcd}, we have 
		\[\frac{(p_2(t) - p_1(t))'p_1(t) p_2(t) }{p_2(t)-p_1(t)} = q(t) s_1(t) s_2(t) \left(q'(t) + q(t) \frac{(s_2(t)-s_1(t))'}{s_2(t)-s_1(t)} \right).\] So, by hypothesis, \[q(t)^2 \frac{(s_2(t)-s_1(t))'}{s_2(t)-s_1(t)}\] is a polynomial, because there is no factor of $s_2(t)-s_1(t)$ dividing $s_1(t)s_2(t)$ as $\gcd(s_1(t), s_2(t))=1$.
		
		Let us decompose $s_2(t) - s_1(t) = q_1(t) q_2(t)$ where 
		all the factors of $q_2(t)$ divide $q(t)$ and no factor of $q_1(t)$ divides $q(t)$; in particular $\gcd(q_1(t), q_2(t)) = 1$. Then $q_1(t)$ divides $s'_2(t)-s'_1(t) = q'_1(t) q_2(t) + q_1(t) q'_2(t)$. So, it divides $q'_1(t) q_2(t)$ by Proposition \ref{Prop trigpoly}. Therefore, $q_1(t)$ must be a nonzero constant. Now, by simply taking $k = q_1(t)$ and $\widehat{q}(t) = q_2(t)$, we are done. 
		
		On the other hand, as by \eqref{ecu3},
		$$A(t)/p_1(t)+p_1'(t)+p_1(t)C(t)=-B(t)=A(t)/p_2(t)+p_2'(t)+p_2(t)C(t),$$
		then by \eqref{ecmcd},
		\[
		s_2(t)s(t)+q'(t)s_1(t)+q(t)s_1'(t)+q(t)s_1(t)C(t) = \] 
		\[s_1(t)s(t)+q'(t)s_2(t)+q(t)s_2'(t)+q(t)s_2(t)C(t)
		\]
		or, equivalently,
		$$q(t)((s_2'(t)-s_1'(t))+C(t)(s_2(t)-s_1(t)))=(s_2(t)-s_1(t))(s(t)+q'(t))$$
		and
		$$s(t)=-q'(t)+q(t)\left(C(t)+\frac{s_2'(t)-s_1'(t)}{s_2(t)-s_1(t)}\right).$$
		As $s_2(t)-s_1(t)= k\, \widehat{q}(t)$, the previous expression reduces to
		$$s(t)=-\left(q'(t)+q(t)\left(C(t)+\frac{\widehat{q}'(t)}{\widehat{q}(t)}\right)\right).$$
	\end{proof}

	The converse also holds by direct verification using Corollary \ref{cor1}. Therefore, we obtain a parameterization of all cases of equation \eqref{ecu1} having at least two invariant curves. 
	
	
	\subsection{Bounds for the invariant curves in the polynomial case}
	
	In the polynomial case, it is interesting to look for a general bound on the number of invariant curves of \eqref{ecu1}. We recall that this is not interesting in the trigonometric case, since we can get a much better bound on the number of rational limit cycles using the Darboux theory of integrability (see Section \ref{Sect3}). Thus, the rest of this section applies only to the polynomial case, and then we are in a Euclidean domain setting.
	
	Given that if $p(t)x-1=0$ is an invariant curve of \eqref{ecu1}, then $p(t)$ is a divisor of $A(t)$ in $\mathbb{C}[t]$, the number of divisors of $A(t)$ would be an upper bound for the number of such invariant curves, up to proportionality. To search for this bound, we must first study the invariant curves of the equation whose polynomials are proportional. 
	
	\begin{prop}\label{propor1}
		The curves $p(t)x-1=0$, $Kp(t)x-1=0$ with $K\in\mathbb{K}\setminus\{0,1\}$ are invariant curves of $\eqref{ecu1}$ if and only if
		$$A(t)=Kp(t)(p'(t)+p(t)C(t)),\quad B(t)=-(K+1)(p'(t)+p(t)C(t)).$$
	\end{prop}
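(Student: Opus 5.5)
Both implications come straight from Corollary~\ref{cor1}, which for an invariant curve $p(t)x-1=0$ gives $p\mid A$ and
\[
-B=\frac{A}{p}+p'+pC .
\]

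For the forward direction, suppose $p(t)x-1=0$ and $Kp(t)x-1=0$ are both invariant. Applying Corollary~\ref{cor1} to each curve gives
\[
-B=\frac{A}{p}+p'+pC
\qquad\text{and}\qquad
-B=\frac{A}{Kp}+K\bigl(p'+pC\bigr).
\]
Subtracting the two expressions yields
\[
\frac{A}{p}\Bigl(1-\frac1K\Bigr)=(K-1)\bigl(p'+pC\bigr).
\]
Since $K\neq 1$, we can divide by $(K-1)/K$. This gives $A/p=K(p'+pC)$, that is, $A=Kp(p'+pC)$.

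To get $B$, substitute this back into the first relation. Then $-B=K(p'+pC)+p'+pC=(K+1)(p'+pC)$, which is the stated formula for $B$. Equivalently, one can apply Proposition~\ref{prop2b} with $p_1=p$ and $p_2=Kp$. In that case $(p_2-p_1)'/(p_2-p_1)=p'/p$, so
\[
A=CKp^2+Kp'p=Kp\bigl(p'+pC\bigr)
\]
directly. This second route serves as a consistency check.

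For the converse, assume $A=Kp(p'+pC)$ and $B=-(K+1)(p'+pC)$. By Proposition~\ref{prop1} with $c=-1$, it suffices to verify $A=(-B-(\tilde p'+\tilde pC))\tilde p$ for $\tilde p=p$ and for $\tilde p=Kp$.
\begin{itemize}
\item For $\tilde p=p$: we have $-B-(p'+pC)=K(p'+pC)$, and multiplying by $p$ gives $A$.
\item For $\tilde p=Kp$: we have $-B-K(p'+pC)=p'+pC$, and multiplying by $Kp$ again gives $A$.
\end{itemize}
Two side conditions remain. The polynomial $Kp$ is non-constant and nonvanishing whenever $p$ is. Also $A\not\equiv 0$ requires $p'+pC\not\equiv 0$, which is implicit in the standing assumptions. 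Hence both curves are invariant.

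There is no genuine obstacle; the only care needed is dividing by $K-1$ and by $p$, both of which are nonzero.
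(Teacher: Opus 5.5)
Your proof is correct and follows essentially the same route as the paper. Both directions rest on Corollary~\ref{cor1} (equivalently Proposition~\ref{prop1} with $c=-1$): the forward direction equates the two expressions for $-B$ to get $A/p=K(p'+pC)$, and the converse is a direct substitution. The only difference is notational (the paper names the quotients $q=A/p$ and $\bar q=A/(Kp)$ and uses $q=K\bar q$), and your converse is cleaner, since the paper's version drops the minus sign in $-B=p'+q+pC$.
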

	
	\begin{proof}
		As $p(t)x-1=0$ is an invariant curve of \eqref{ecu1}, then by Corollary \ref{cor1} there must exist a polynomial $q(t)$ such that
		$$A(t)=p(t)q(t),\quad -B(t)=p'(t)+q(t)+p(t)C(t).$$
		Moreover, as $Kp(t)x-1=0$ is also an invariant curve of \eqref{ecu1}, again by Corollary \ref{cor1} there must also exist a polynomial $\bar{q}(t)$ such that
		$$A(t)=K p(t)\bar{q}(t),\quad -B(t)=Kp'(t)+\bar{q}(t)+Kp(t)C(t).$$
		Thus, $q(t)=K\bar{q}(t)$ and, equating both expressions of $B(t)$, one gets that $\bar{q}(t)=p'(t)+p(t)C(t)$, and, 
		$$A(t)=Kp(t)(p'(t)+p(t)C(t)),\quad B(t)=-(K+1)(p'(t)+p(t)C(t)).$$
		Conversely, taking $q(t)=K(p'(t)+p(t)C(t))$, one has that  $A(t)=p(t)q(t)$, $B(t)=p'(t)+q(t)+p(t)C(t)$ and, consequently, that $p(t)x-1=0$ is invariant. Picking $q(t)=(p'(t)+p(t)C(t))$, one would similarly prove that $Kp(t)x-1=0$ is also invariant. 
	\end{proof}
	
	\begin{prop}\label{propor2}
		Equation \eqref{ecu1} has at most two invariant curves whose polynomials are proportional.
	\end{prop}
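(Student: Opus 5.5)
Suppose, aiming for a contradiction, that \eqref{ecu1} has three invariant curves whose polynomials are pairwise proportional. We may write them as $p(t)x-1=0$, $K_1p(t)x-1=0$ and $K_2p(t)x-1=0$, with $K_1,K_2\in\mathbb{K}\setminus\{0,1\}$ and $K_1\neq K_2$. The plan is to apply Proposition \ref{propor1} to the two pairs that contain $p(t)x-1=0$ and compare the resulting expressions for $A(t)$.

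Applying Proposition \ref{propor1} to the pair $\{p,\,K_1p\}$ gives
$$A(t)=K_1p(t)\bigl(p'(t)+p(t)C(t)\bigr),$$
and applying it to the pair $\{p,\,K_2p\}$ gives
$$A(t)=K_2p(t)\bigl(p'(t)+p(t)C(t)\bigr).$$
Subtracting, we get $(K_1-K_2)\,p(t)\bigl(p'(t)+p(t)C(t)\bigr)=0$. Since $K_1\neq K_2$ and $p(t)\not\equiv 0$, this forces $p'(t)+p(t)C(t)\equiv 0$. Substituting back into either expression yields $A(t)\equiv 0$, which contradicts the standing assumption that $A(t)\not\equiv 0$. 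The same conclusion can be reached from the formula for $B(t)$: the two expressions give $(K_1+1)w=(K_2+1)w$ with $w=p'+pC$, so again $w\equiv0$, and then $B\equiv0$.

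As an additional check, $p'+pC\equiv 0$ is itself impossible. Since $C\not\equiv0$, we have $\deg(pC)\ge\deg(p)>\deg(p')$ in the polynomial case, so the leading term of $pC$ cannot be cancelled. In the trigonometric case the argument at the end of the proof of Lemma \ref{lema2} applies. The argument above does not need this, however.

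The only point needing care is that Proposition \ref{propor1} is stated for a fixed base polynomial $p$ and a factor $K$. We therefore have to make sure that every proportional triple can be normalized so that one of its members serves as the base $p$. This is immediate: take $p$ to be any one of the three polynomials and let $K_1,K_2$ be the ratios of the other two to it. These ratios differ from $0$ and $1$, and from each other, because the curves are distinct. I do not expect any real obstacle in this argument.
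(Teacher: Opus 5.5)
Your computation is correct and matches the first step of the paper's proof. If $p$, $K_1p$ and $K_2p$ were all invariant, Proposition~\ref{propor1} gives $(K_1-K_2)\,p\,(p'+pC)\equiv 0$, hence $A\equiv 0$. So no proportionality class contains three invariant curves. If the statement is read literally as ``no three invariant curves have mutually proportional polynomials,'' your proof is complete.

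The paper, however, proves and uses a stronger claim: there is at most \emph{one} pair of invariant curves with proportional polynomials in total. This is what Proposition~\ref{propbound} needs. There the bound is the number of divisors of $A(t)$ up to proportionality \emph{plus one}, so at most one proportionality class may contribute two curves. Your argument does not exclude two disjoint pairs $\{p,\,Kp\}$ and $\{\bar p,\,\bar K\bar p\}$ with $p$ and $\bar p$ not proportional. From your result alone, every class could contribute two curves, and the count would roughly double instead of increasing by one.

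The missing step uses the formula for $B$ that you wrote down but only used as a consistency check. Put $w=p'+pC$. Proposition~\ref{propor1} gives $A=Kpw$ and $B=-(K+1)w$, and $w\not\equiv 0$ because $A\not\equiv 0$.
\begin{itemize}
\item If $K=-1$, then $B\equiv 0$, which is excluded. So $K\neq -1$.
\item Dividing the two formulas gives $p=-\frac{K+1}{K}\,\frac{A}{B}$. Hence the base polynomial of any proportional pair equals $A/B$ up to a nonzero constant.
\item A second pair then satisfies $\bar p=\frac{\bar K+1}{\bar K}\,\frac{K}{K+1}\,p$, so it lies in the same class as $\{p,Kp\}$.
\item By your argument that class has at most two curves, so the second pair coincides with the first.
\end{itemize}
Adding this step gives the statement in the form the paper needs.
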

	\begin{proof}
		Because of Proposition \ref{propor1}, we know that if $p(t)x-1=0$ is an invariant curve of \eqref{ecu1}, then there must exists an unique $K\in\mathbb{C}\setminus\{0,1\}$ such that $Kp(t)x-1=0$, and 
		$$A(t)=Kp(t)(p'(t)+p(t)C(t)),\quad B(t)=-(K+1)(p'(t)+p(t)C(t)).$$
		This, if $K\neq-1$ implies that
		$$p(t)=-\frac{K+1}{K}\frac{A(t)}{B(t)}.$$
		Now, if there exists a different pair of invariant curves with proportional polynomials, say $\bar{p}(t)x-1=0$, $\bar{K}\bar{p}(t)x-1=0$, then if $\bar{K}\neq-1$ also
		$$\bar{p}(t)=-\frac{\bar{K}+1}{\bar{K}}\frac{A(t)}{B(t)}=\frac{\bar{K}+1}{\bar{K}}\frac{K}{K+1}p(t)$$
		and we are done. 
		
		Finally, note that if $p(t)x-1=0$ and $-p(t)x-1=0$ are both invariant curves of \eqref{ecu1}, then $B(t)\equiv0$ and we have excluded that case.
	\end{proof}
	
	\begin{prop}\label{propbound}
		If $\deg(A)-\deg(C)\leq1$, \eqref{ecu1} has at most one invariant curve. In another case, if $\deg(A)-\deg(C)$ is odd or $\deg(A)+\deg(C) < 2 \deg(B)$, then \eqref{ecu1} has at most two invariant curves. In another case, an upper bound for the number of invariant curves of $\eqref{ecu1}$ is $\displaystyle{\binom{\deg(A)}{(\deg(A)+1)/2}+1}$.
	\end{prop}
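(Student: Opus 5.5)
The first two claims are already proved. The case $\deg(A)-\deg(C)\leq 1$ is Corollary \ref{coram1}, which by Remark \ref{rema1} also covers constant $C(t)$ in the polynomial case. The case "$\deg(A)-\deg(C)$ odd or $\deg(A)+\deg(C)<2\deg(B)$" is Proposition \ref{propc}, which applies in the polynomial case without restriction on $C(t)$. So the only new work is the general bound in the remaining case, where we may assume at least three invariant curves exist.

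By Corollary \ref{cor3}, every invariant curve $p(t)x-1=0$ then satisfies $\deg(p)=(\deg(A)-\deg(C))/2=:m$. By Corollary \ref{cor1}, $p(t)$ divides $A(t)$ in $\mathbb{C}[t]$. Write $A(t)=a\prod_{j=1}^{n}(t-\alpha_j)$ with $n=\deg(A)$, listing roots with multiplicity. Each $p(t)$ is then $\lambda\prod_{j\in S}(t-\alpha_j)$ for some $\lambda\in\mathbb{C}\setminus\{0\}$ and some $m$-element sub-multiset $S$. The number of distinct monic divisors of degree $m$ is at most the number of $m$-subsets of an $n$-set, namely $\binom{n}{m}$. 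Since $m\leq n/2$ (as $\deg(C)\geq 0$), this is bounded by the central binomial coefficient. That coefficient equals $\binom{n}{\lfloor n/2\rfloor}=\binom{n}{(n+1)/2}$ when $n$ is odd; for even $n$ I read the stated expression with the appropriate floor/ceiling, i.e.\ as the maximal binomial coefficient, which is what the argument really uses. (If $\deg(C)>0$ the bound $\binom{n}{m}$ is even smaller.)

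Next, to each monic divisor corresponds a class of invariant curves differing only by the scalar $\lambda$. By Proposition \ref{propor2}, at most one such class contains two curves, and no class contains three. Hence the total count is at most (number of admissible monic divisors) $+1$, giving $\binom{\deg(A)}{(\deg(A)+1)/2}+1$.

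The main obstacle is the bookkeeping in this last step. Proposition \ref{propor2} must be read as saying that across the whole equation only one proportional pair can occur; its proof shows any two such pairs are proportional to each other, so they coincide as a class. Within a single class, three mutually proportional curves are impossible, because $K$ is uniquely determined by Proposition \ref{propor1}. I would also check the degenerate parity issue: in case (b.2), $\deg(A)-\deg(C)$ is even, so $m$ is an integer, and the binomial bound is applied with $n=\deg(A)$ regardless of its parity, using monotonicity of $\binom{n}{k}$ for $k\leq n/2$.
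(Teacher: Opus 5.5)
Your proposal is correct and follows the paper's argument: you bound the number of proportionality classes of invariant curves by counting divisors of $A(t)$, then add one for the single proportional pair permitted by Proposition \ref{propor2}. Your version is more careful than the paper's one-line proof in two ways. First, you use Corollary \ref{cor3} to restrict the count to monic divisors of degree $(\deg(A)-\deg(C))/2$; the paper leaves this implicit, and it is needed, since the number of divisors up to scaling can reach $2^{\deg(A)}$. Second, you note that $\binom{\deg(A)}{(\deg(A)+1)/2}$ is only literally meaningful for odd $\deg(A)$.
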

	
	\begin{proof}
		It is enough to remember that the number of divisors of $A(t)$, up to proportionality, is $\binom{\deg(A)}{(\deg(A)+1)/2}$ and that at most two invariant curves have proportional polynomials. 
	\end{proof}
	
	\section{Darboux's Theory of Integrability}\label{Sect3}
	
	Now let us take advantage of Darboux's Integrability Theory to obtain results related to the integrability of the equation, in the polynomial case, and upper bounds on the number of rational limit cycles, in the trigonometric case.
	
	We first remind the basic result of this theory from the original work of Darboux \cite{D}, adapted both to the polynomial and trigonometric settings. 
	
	\begin{prop}\label{Darboux}
		If \eqref{ecu1} has algebraic irreducible invariant curves $h_1(t,x)=0,\dots,$ $h_r(t,x)=0$ with respective cofactors $K_1(t,x),\dots, K_r(t,x)$ and there exist not all null numbers $\alpha_1,\dots,\alpha_r$ such that
		\[\sum_{i=1}^r\alpha_iK_i(t,x)=0,\]
		then
		\[h(t,x):=\displaystyle\prod_{i=1}^rh_i(t,x)^{\alpha_i}\]
		is a first integral of \eqref{ecu1}. In this case, we say that \eqref{ecu1} is a Darboux integrable.
	\end{prop}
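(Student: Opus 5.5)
The plan is the classical Darboux computation, carried out through the vector field associated with \eqref{ecu1}. Set
\[
\mathcal{X}=\partial_t+\bigl(A(t)x^3+B(t)x^2+C(t)x\bigr)\partial_x .
\]
Solutions of \eqref{ecu1} are exactly the integral curves of $\mathcal{X}$ in the $(t,x)$-plane. In this language the invariance condition \eqref{ecu:inv}, and its analogue for a general algebraic curve, reads
\[
\mathcal{X}(h_i)=K_i\,h_i,\qquad i=1,\dots,r .
\]
A function $H$ is a first integral when $\mathcal{X}(H)=0$ on an open dense set and $H$ is not locally constant. So the proof splits into two steps: show that $\mathcal{X}(h)=0$, then show that $h$ is nonconstant.

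\textbf{Vanishing of $\mathcal{X}(h)$.} Work on the open set $U=\{(t,x): h_i(t,x)\neq 0 \text{ for all } i\}$, which is dense because no $h_i$ vanishes identically. On each simply connected piece of $U$, fix branches of $\log h_i$, so that $h=\exp\bigl(\sum_i \alpha_i\log h_i\bigr)$ is a well-defined smooth (possibly complex-valued) function. This is needed because the $\alpha_i$ may be non-integer or complex. Since $\mathcal{X}$ is a derivation, the chain rule gives
\[
\mathcal{X}(h)=h\sum_{i=1}^r\alpha_i\frac{\mathcal{X}(h_i)}{h_i}=h\sum_{i=1}^r\alpha_i K_i .
\]
By hypothesis the last sum is identically zero, so $\mathcal{X}(h)=0$ on $U$. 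Hence $h$ is constant along every solution of \eqref{ecu1} that stays in $U$.

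This argument is identical for $R=\mathbb{K}[t]$ and $R=\mathbb{R}[\cos t,\sin t]$. In the trigonometric case, $\mathcal{X}$ is simply regarded on the cylinder or on its universal cover $\mathbb{R}^2$, and the formal computation is unchanged.

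\textbf{Nonconstancy of $h$.} This is the only delicate point, and it is where irreducibility and distinctness of the curves are used. I would argue through the logarithmic derivative in $x$:
\[
\frac{\partial_x h}{h}=\sum_{i=1}^r\alpha_i\frac{\partial_x h_i}{h_i}.
\]
I will show that this rational function of $x$, with coefficients in the fraction field of $R$, is not identically zero. In the situation used in this paper, each curve has the form $h_i=p_i(t)x-1$ with pairwise distinct $p_i$, by Proposition~\ref{prop1}. Then
\[
\sum_i \alpha_i\frac{p_i}{p_ix-1}
\]
has simple poles at the distinct points $x=1/p_i(t)$, with residues $\alpha_i/1$ up to a nonzero factor. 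By uniqueness of partial fraction decomposition, it vanishes only if all $\alpha_i=0$, which is excluded.

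For general irreducible $h_i$, the same conclusion follows from unique factorization in $F[x]$, where $F$ is the fraction field of $R$. Here Proposition~\ref{Prop trigpoly} guarantees that distinct irreducibles of $R[x]$ remain non-associate in the trigonometric case. The partial fractions argument then shows that $\sum\alpha_i\,\partial_x h_i/h_i\not\equiv0$ unless every $\alpha_i$ vanishes. Consequently $h$ is not constant on any open set.

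I expect this last step, together with the careful handling of branches of $h_i^{\alpha_i}$ when the exponents are not integers, to be the main obstacle. The vanishing of $\mathcal{X}(h)$ itself is a routine one-line computation.
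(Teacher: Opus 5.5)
The paper gives no proof of this proposition; it just recalls Darboux's classical theorem. Your computation $\mathcal{X}(h)=h\sum_i\alpha_iK_i=0$ is the standard one and is fine. Your nonconstancy argument is also fine for curves of degree one in $x$: the curves $p_i(t)x-1=0$, and also $x=0$, whose pole is distinct from every $1/p_i$. That is all that Proposition~\ref{PropD} and the proof of Theorem~\ref{mainth} use. In the polynomial case your general claim is also right. There $\mathbb{K}[t,x]$ is a UFD, and Gauss's lemma makes non-associate irreducibles of positive $x$-degree coprime in $F[x]$. You should add that no invariant irreducible curve has $x$-degree $0$: $h'(t)=K\,h(t)$ forces $h$ constant, and your partial fractions cannot see such factors.

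The gap is the general trigonometric step. Proposition~\ref{Prop trigpoly} is about factorizations of nonvanishing elements of $R$ and says nothing about $R[x]$. More importantly, your partial-fraction argument does not need the $h_i$ to be pairwise non-associate. It needs them to have no common root over $F$, i.e.\ to be pairwise coprime in $F[x]$. Since $R=\mathbb{R}[\cos t,\sin t]$ is not a UFD (its class group is $\mathbb{Z}/2$), Gauss's lemma fails and this implication breaks down. For example, take $g=x\sin(t/2)+\cos(t/2)$ and $u=x\cos(t/2)-\sin(t/2)$, and set
\[
f_1=g^2=\tfrac{1-\cos t}{2}x^2+\sin t\,x+\tfrac{1+\cos t}{2},\qquad
f_2=gu=\tfrac{\sin t}{2}x^2+\cos t\,x-\tfrac{\sin t}{2},\qquad
f_3=u^2=\tfrac{1+\cos t}{2}x^2-\sin t\,x+\tfrac{1-\cos t}{2}.
\]
These lie in $R[x]$ and are irreducible and pairwise non-associate there. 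A splitting into linear factors would produce some $a\in R$ with exactly one simple real zero per period, which is impossible for a real trigonometric polynomial. Yet $f_1f_3=f_2^2$. So with $(\alpha_1,\alpha_2,\alpha_3)=(1,-2,1)$ we get $\sum\alpha_i\partial_xf_i/f_i\equiv0$ and $f_1f_2^{-2}f_3\equiv1$, while any cofactors satisfy $K_1-2K_2+K_3=0$.

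These particular curves are not invariant for any equation \eqref{ecu1}, since their roots $-\cot(t/2)$ and $\tan(t/2)$ vanish at real $t$. So the proposition itself is not contradicted. But your argument uses only irreducibility and distinctness, and it cannot exclude such configurations.

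To close the gap, you can restrict the claim to curves of degree one in $x$. Alternatively, use the structure of \eqref{ecu1}:
\begin{itemize}
\item Since $x\equiv0$ is a solution, a root $\xi\not\equiv0$ of an invariant curve cannot tend to $0$ at a real $t_0$. Compare the leading Puiseux terms of $\xi'$ and $\xi(A\xi^2+B\xi+C)$.
\item Hence the content ideal of each monic $F$-irreducible invariant factor $g$ has the same valuations at real points as $g(0)\in F^*$.
\item For $0\neq\phi\in F$ the sum of $v_P(\phi)$ over real points $P$ is even, so that ideal is principal.
\item Therefore irreducible invariant elements of $R[x]$ remain irreducible over $F$, and your partial-fraction argument goes through.
\end{itemize}
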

	
	
	\begin{prop}\label{PropD}
		If \eqref{ecu1} has invariant curves $p_1(t)x-1=0,\dots,p_r(t)x-1=0$ with respective cofactors $K_1(t,x),\dots, K_r(t,x)$, and we write $K_0(t,x)=A(t)x^2+B(t)x+C(t)$ for the cofactor of the curve $x=0$, then there exist numbers $\alpha_0,\dots,\alpha_r$ not all zero such that $$\sum_{i=0}^r\alpha_iK_i(t,x)=0$$ if and only if \[\sum_{i=1}^r\alpha_iA(t)/p_i(t)=0\quad \text{and}\quad 0 = \alpha_0 = \sum_{i=1}^r \alpha_i.\]
	\end{prop}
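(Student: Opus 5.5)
The plan is a direct computation: write out all cofactors explicitly, collect coefficients of powers of $x$, and use the relations between $A$, $B$, $C$ and the $p_i$ coming from Corollary \ref{cor1}.

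By Corollary \ref{cor1}, the cofactor of $p_i(t)x-1=0$ is
\[K_i(t,x)=A(t)x^2-\bigl(p_i'(t)+p_i(t)C(t)\bigr)x .\]
The cofactor of $x=0$ is $K_0=A x^2+B x+C$. Collecting the coefficients of $x^2$, $x^1$ and $x^0$ in $\sum_{i=0}^r\alpha_iK_i=0$ gives three conditions:
\[\Bigl(\sum_{i=0}^r\alpha_i\Bigr)A=0,\qquad \alpha_0 B-\sum_{i=1}^r\alpha_i\bigl(p_i'+p_iC\bigr)=0,\qquad \alpha_0 C=0.\]
Since $C\not\equiv0$ by standing assumption, the last condition forces $\alpha_0=0$. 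Since $A\not\equiv0$, the first then gives $\sum_{i=1}^r\alpha_i=0$.

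For the middle condition, I would use \eqref{ecu3} in the form $p_i'+p_iC=-B-A/p_i$. With $\alpha_0=0$ the middle condition becomes
\[0=\sum_{i=1}^r\alpha_i\bigl(B+A/p_i\bigr)=\Bigl(\sum_{i=1}^r\alpha_i\Bigr)B+\sum_{i=1}^r\alpha_iA/p_i .\]
Because $\sum\alpha_i=0$, this is exactly $\sum_{i=1}^r\alpha_iA/p_i=0$. This proves the forward direction.

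For the converse, I assume $\alpha_0=0$, $\sum_{i\ge1}\alpha_i=0$ and $\sum_{i\ge1}\alpha_iA/p_i=0$. Reversing the substitution above shows all three coefficient equations hold, so $\sum_{i=0}^r\alpha_iK_i=0$. The tuple is not all zero, since the $\alpha_i$ were given not all zero on either side.

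There is no real obstacle here. The only care needed is that the nontriviality of the $\alpha$'s is preserved in both directions: $\alpha_0=0$ means the nonzero coefficients lie among $\alpha_1,\dots,\alpha_r$. I would also check that this works uniformly in the polynomial and trigonometric rings, which it does, since only ring identities and $A, C\not\equiv0$ are used.
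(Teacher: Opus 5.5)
Your proposal is correct and is essentially the paper's proof: both use Corollary \ref{cor1} to write $K_i=A x^2+(B+A/p_i)x$, then collect the coefficients of $x^2$, $x$ and $x^0$ in $\sum_{i=0}^r\alpha_iK_i$ and read off the conditions. You also state explicitly the standing assumptions $A\not\equiv0$ and $C\not\equiv0$ that force $\sum_{i=0}^r\alpha_i=0$ and $\alpha_0=0$, which the paper leaves implicit in ``our claim follows.''
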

	
	\begin{proof}
		First, we recall that, by Corollary \ref{cor1}, the invariant curves $p_i(t) x - 1 = 0$ have cofactors $K_i(t,x)=A(t)x^2-(p_i'(t)+p_i(t)C(t))x = A(t)x^2 + (B(t)+ A(t)/p_i(t))x,\ i=1,\dots,r$. Thus,
		$$\sum_{i=0}^r\alpha_iK_i(t,x)=\alpha_0K_0(t,x)+\sum_{i=1}^r\alpha_iK_i(t,x)=$$
		$$=\alpha_0 \left(A(t)x^2+B(t)x+C(t)\right)+\sum_{i=1}^r\alpha_i (A(t)x^2 + (B(t)+ A(t)/p_i(t))x) = $$
		$$=\left(A(t) \sum_{i=0}^r\alpha_i  \right) x^2+ \left(\alpha_0 B(t) + \sum_{i=1}^r\alpha_i (B(t)+ A(t)/p_i(t)) \right)x + \alpha_0 C(t),$$
		and our claim follows. 
	\end{proof}
	
	We can now prove the main result of the paper. 
	
	\begin{proof}[Proof of Theorem \ref{mainth}]
		The claims (a) and (b.1) have been proved in Corollaries \ref{coram1} and Proposition \ref{propc}, respectively. Moreover, the first part of claim (b.2.1) has been proven in Proposition \ref{propbound}.
		
		Let us now prove the rest of claim (b.2.1). By Proposition \ref{prop2} and Corollary \ref{cor3}, if \eqref{ecu1} has $r \geq 3$ invariant curves, say, $p_i(t) x -1 = 0,\ i = 1, \ldots, r$, then $\deg(p_i) = (\deg(A)-\deg(C))/2,$ for every $i \in \{1, \ldots, r\}$. Moreover, by Corollary \ref{cor1}, $p_i(t)$ divides $A(t)$, for every $i \in \{1, \ldots, r\}$. Therefore $A(t)/p_i(t)$ is a polynomial of degree $d := (\deg(A)+\deg(C))/2$, for every $i \in \{1, \ldots, r\}$. 
		
		Now, since the dimension of the $\mathbb{K}-$vector space of polynomials in $t$ of degree up to $d$ is equal to $d+1$, if $r \geq 2 (d+2)$, then there exist $\lambda_1, \ldots, \lambda_{d+2}$ and $\mu_{d+3}, \ldots, \mu_{r}$  such that \[\sum_{i=1}^{d+2} \lambda_i A(t)/p_i(t) = \sum_{i=d+3}^{r} \mu_i A(t)/p_i(t) =  0.\] Thus, taking \begin{align*}
			\alpha_0 & = 0,\\  \alpha_i & = \left(\sum_{j=d+3}^{r} \mu_j\right) \lambda_i,\ i = 1, \ldots, d+2,\\ \alpha_i & = -\left(\sum_{j=1}^{d+2} \lambda_j\right) \mu_i,\ i = d+3, \ldots, r,   
		\end{align*}
		we have that $0 = \alpha_0 = \sum_{i=1}^r \alpha_i$ and that $\sum_{i=1}^r \alpha_i A(t)/p_i(t) = 0$. So, by Proposition \ref{PropD} and as $2(d+2)=\deg(A)+\deg(C)+4$, we are done.
		
		We prove now (b.2.2). Following the same steps of the polynomial case, one gets that if equation \eqref{ecu1} has $r \geq 3$ invariant curves, say $p_i(t) x -1 = 0,\ i = 1, \ldots, r$, then $A(t)/p_i(t)$ is a trigonometric polynomial of degree $\hat{d} := (\deg(A)+\deg(C))/2$, for every $i \in \{1, \ldots, r\}$.
		
		We can repeat the rest of the argument of the polynomial case verbatim, and in view of the dimension of the $\mathbb{R}$-vector space of trigonometric polynomials in $t$ of the degree up to $\hat{d}$ is $2\hat{d}+1$, we conclude that if $r\geq2(2\hat{d}+2)$, \eqref{ecu1} has a Darboux first integral, and also a center. Thus, it can not have rational limit cycles. Consequently, a bound for the number of rational limit cycles is $2(2\hat{d}+2) =2\deg(A)+2\deg(C)+4$.
		
		Finally, we prove (c). On the one hand, if $p_i(t)x-1=0$, $i=1,\dots,r$, are invariant curves of $\eqref{ecu1}$, we know that $\deg(A/p_i)\leq\deg(A)-1$ for all $i\in\{1,\dots,r\}$, as $\deg(p_i)\geq1$. Thus, all the polynomials belong to the $\mathbb{R}$-vector space of trigonometric polynomials in $t$ of degree up to $\bar{d}=\deg(A)-1$, whose dimension is $2\bar{d}+1$. Thus, if $r\geq2(2\bar{d}+2)$, the equation would have a center. 
		
		However, on the other hand, the Abel equation
		$$x'=A(t)x^3+B(t)x^2+cx$$
		can never have a center, as its displacement function $d(x)$, whose isolated zeros correspond one to one with the limit cycles of the equation (see \cite{LL}), has derivative
		$$d'(x)=\exp\left(\int_0^{2\pi}(3A(t)x^2+2B(t)x+c)dt\right)-1.$$ 
		So $d'(0)=\exp(2\pi c)-1\neq0$, and $d(0)=0$ because $x(t)\equiv0$ is a solution of the equation. Therefore, $d(x)\not\equiv0$ and the equation can never have a center. 
		
		To conclude, in this case equation \eqref{ecu1} must have less that $2(2\bar{d}+2)=4\deg(A)$ invariant curves.
	\end{proof}
	
	\section*{Acknowledgements}
	
	The author is partially supported by grant number PID2023-151974NB-I00 funded
	by MCIN/AEI/10.13039/501100011033.\\
	
	I would like to thank professor Mª Jesús Álvarez Torres from Universitat de les Illes Balears, as this paper was developed as the answer to one of her thoughtful questions as a member of the examination committee of my doctoral thesis, and as a natural extension of the content of the thesis itself.

\end{document}